\documentclass[11pt,letterpaper]{amsart}
\usepackage[foot]{amsaddr}

\usepackage{mathtools}
\usepackage{amsmath}
\usepackage[T1]{fontenc}
\usepackage[latin9]{inputenc}
\usepackage{geometry}
\geometry{verbose,letterpaper,tmargin= 1in,bmargin=1in,lmargin=1in,rmargin=1in}
\usepackage{wrapfig}
\usepackage{multicol}
\usepackage{graphicx}
\usepackage{soul}
\usepackage{xcolor}
\usepackage{amssymb}
\usepackage{placeins}
\usepackage{bbm}
\setcounter{tocdepth}{1}
\usepackage{cite}
\usepackage{caption}
\usepackage{enumerate}
\usepackage{afterpage}
\usepackage{enumitem}
\usepackage{bmpsize}
\usepackage{hyperref}
\usepackage{tabu}
\usepackage{enumitem}
\numberwithin{equation}{section}
\usepackage{stmaryrd}
\usepackage{tikz}
\usetikzlibrary{matrix,graphs,arrows,positioning,calc,decorations.markings,shapes.symbols,patterns, decorations.pathreplacing}

\usepackage{ifthen}
\usetikzlibrary{math}
\usetikzlibrary{matrix,graphs,arrows,positioning,calc,decorations.markings,shapes.symbols}

\makeatletter
\renewcommand{\email}[2][]{%
  \ifx\emails\@empty\relax\else{\g@addto@macro\emails{,\space}}\fi%
  \@ifnotempty{#1}{\g@addto@macro\emails{\textrm{(#1)}\space}}%
  \g@addto@macro\emails{#2}%
}
\makeatother


\newtheorem{theorem}{Theorem}[section]
\newtheorem{lemma}[theorem]{Lemma}
\newtheorem{proposition}[theorem]{Proposition}

\newtheorem{corollary}[theorem]{Corollary}
{ \theoremstyle{definition}
}
{ \theoremstyle{remark}
\newtheorem{remark}[theorem]{Remark}}

\newcommand{\carry}{CARRY}
\newcommand{\rsk}{\mathsf{RSK}}
\newcommand{\rskhs}{\mathsf{RSK}^{\mathsf{sym}}}

\usepackage{ifthen}

\title{A note on last passage percolation and Schur processes}
\date{\today}
\author{Evgeni Dimitrov} 
\author{Zongrui Yang} 
\begin{document}

\begin{abstract}
In this note we provide a short proof of the distributional equality between last passage percolation with geometric weights along a general down-right path and Schur processes. We do this in both the full-space and half-space settings, and for general parameters. The main inputs for our arguments are generalizations of the Robinson-Schensted-Knuth correspondence and Greene's theorem due to Krattenthaler, which are based on Fomin's growth diagrams. 
\end{abstract}

\maketitle

\tableofcontents

%
%
\section{Introduction}\label{Section1} 
{\em Last passage percolation} (LPP) and the related models known as the {\em polynuclear growth} (PNG), and the {\em totally asymmetric simple exclusion process} (TASEP), are some of the most well-studied stochastic systems in the {\em Kardar-Parisi-Zhang} (KPZ) universality class. Their analysis has revealed many remarkable connections between symmetric functions, increasing subsequences of random permutations, the combinatorics of Young tableaux, and related topics; see, e.g. \cite[Chapters 4--5]{Romik15}. 

There are different versions of LPP depending on the nature of the background noise $(w_{ij}:i,j \geq 1)$ that defines the model, which could be independent Bernoulli, geometric or exponential random variables, and in this paper we focus on the geometric case. In the seminal paper \cite{J00} Johansson proved the convergence of geometric LPP to the Tracy-Widom distribution, and hence demonstrated that it belongs to the KPZ universality class. In \cite{BR01c}, building up on the earlier works \cite{BR01a, BR01b}, Baik and Rains considered a symmetrized version of geometric LPP, corresponding to conditioning on $w_{ij} = w_{ji}$ for $i \neq j$, and also established various convergence results to analogues of the Tracy-Widom distribution. Following \cite{BBCS}, we will refer to the LPP with independent weights as {\em full-space}, and the one with symmetrized weights as {\em half-space}.

An important reason behind the success in the asymptotic analysis of LPP is that it has the structure of a {\em determinantal point process} in the full-space and a {\em Pfaffian point process} in the half-space setting. These structural properties come as consequences from the statements that there is a distributional equality between the full-space LPP and the {\em Schur processes} from \cite{OR03} and between the half-space LPP and the {\em Pfaffian Schur processes} from \cite{BR05}. The connections between LPP and Schur processes have been well-known to experts, and come from two deep results in combinatorics: the {\em Robinson-Schensted-Knuth} (RSK) {\em correspondence}, see \cite{K70} for the original result and \cite[Section 7.11]{Stanley99} for a textbook treatment, and {\em Greene's theorem} \cite{Greene74}. Despite being recognized over two decades ago, the connections between RSK and LPP, and their various generalizations, are still actively being studied and used to uncover many remarkable structural properties for a large family of stochastic systems, see \cite{DNV22} and the references therein. 

The purpose of this note is to provide a short derivation of the distributional equality between LPP and Schur processes. For the sake of completeness we do this in both the full-space and half-space settings (see Theorems \ref{T1} and \ref{T2}). These results are likely known to experts, but to our knowledge they have not appeared previously in this form. Existing references usually state related results in a way that is hard to parse and short on details, which was a key motivation for writing this note. We have aimed to state our results under the most general conditions we are aware of, allowing for arbitrary down-right paths and parameters. A central goal has been to make the arguments accessible even to non-experts and essentially self-contained --- the only prerequisites are the generalizations of the RSK algorithm and Greene's theorem from \cite[Section 4.1]{K06}.

%
%
\section{Definitions and main results}\label{Section2}

%
%
\subsection{Full-space geometric LPP}\label{Section2.1} The full-space model depends on two sequences of real parameters $\{x_i\}_{i \geq 1}$, $\{y_i\}_{i \geq 1}$, such that $x_i, y_j \geq 0$ and $x_iy_j \in [0,1)$ for all $i,j \geq 1$. The background noise is given by an array $W = (w_{i,j}:i,j \geq 1)$ of independent geometric variables $w_{i,j} \sim \mathrm{Geom}(x_iy_j)$, i.e.
\begin{equation}\label{Eq.Geom}
\mathbb{P}(w_{i,j} = k) = (x_iy_j)^k \cdot (1 - x_i y_j) \mbox{ for } k \in \mathbb{Z}_{\geq 0}.
\end{equation}
We visualize the weight $w_{i,j}$ as being associated with the vertex $(i,j)$ on the lattice $\mathbb{Z}^2$, see the left side of Figure \ref{Fig.Grid}. An {\em up-right path} $\pi$ in $\mathbb{Z}^2$ is a (possibly empty) sequence of vertices $\pi = (v_1, \dots, v_r)$ with $v_i \in \mathbb{Z}^2$ and $v_i - v_{i-1} \in \{(0,1), (1,0)\}$. For an up-right path $\pi$ in $\mathbb{Z}_{\geq 1}^2$, we define its {\em weight} by
\begin{equation}\label{EPathWeight}
W(\pi) = \sum_{v\in\pi} w_{v},
\end{equation}
and for any $(m,n) \in \mathbb{Z}_{\geq 1}^2$ we define the {\em last passage time} $G_1(m,n)$ by
\begin{equation}\label{Eq.LPT}
G_1(m,n) = \max_{\pi} W(\pi),
\end{equation}
where the maximum is over all up-right paths that connect $(1,1)$ with $(m,n)$. It turns out that $G_1(m,n)$ can naturally be embedded into a sequence $G(m,n) = (G_k(m,n): k \geq 1)$ of higher-rank last passage times. There are two equivalent ways of introducing these, which we describe next.

Suppose that $A = (a_{i,j}: i =1, \dots, m, j = 1, \dots, n)$ is an $n\times m$ matrix with {\em non-negative} real entries. To make our notation consistent with the one for the array $W$ above, we number the rows of $A$ by $j = 1,\dots, n$ from bottom to top, and the columns of $A$ by $i = 1, \dots, m$ from left to right. For $k =1, \dots, \min (m,n)$, we define 
\begin{equation}\label{Eq.GDef}
g_k(A) = \max_{\pi_1, \dots, \pi_k} [A(\pi_1) + \cdots + A(\pi_k)], \mbox{ where } A(\pi) = \sum_{v \in \pi} a_{v},
\end{equation}
and the maximum is over $k$-tuples of pairwise disjoint up-right paths $(\pi_1, \dots, \pi_k)$, with $\pi_i$ connecting the vertex $(1,i)$ to $(m, n-k+i)$. Note that $g_{\min(m,n)}(A) = \sum_{i = 1}^m \sum_{j = 1}^n a_{i,j}$, as one can find $\min(m,n)$ pairwise disjoint paths as above that contain all the vertices, see the right side of Figure \ref{Fig.Grid}. When $k \geq \min(m,n) +1$ we cannot find $k$ pairwise disjoint paths as above, and so the convention is to set
\begin{equation}\label{Eq.GDef2}
g_k(A) = \sum_{i = 1}^m \sum_{j = 1}^n a_{i,j} \mbox{ for } k \geq \min(m,n) + 1.
\end{equation}

\begin{figure}[ht]
\centering
\begin{tikzpicture}[scale=0.9]

\def\m{7} 
\def\n{5} 
\def\k{3} 

\definecolor{Bg}{gray}{1.0}        
\definecolor{C1}{gray}{0.8} 
\definecolor{C2}{gray}{0.5} 
\definecolor{C3}{gray}{0.2} 

\begin{scope}[shift={(0,0)}]


  \foreach \j in {1,...,\m}{
      \node at (\j+0.5, -0.5) {\(\j\)};
  }
  \foreach \j in {1,...,\n}{
      \node at (0.35,-0.5 + \j) {\(\j\)};
  }

  \foreach \x/\y in {1/2, 1/3, 2/3, 3/3, 3/4, 3/5, 4/5} {
  \fill[C1] (\x,\y) rectangle ++(1,-1);
  
}

  \foreach \x/\y in {2/1, 3/1, 4/3, 6/3, 6/4, 7/5} {
  \fill[C2] (\x,\y) rectangle ++(1,-1);
  
}
\foreach \i in {1,...,\n}{
  \foreach \j in {1,...,\m}{
    \draw[black] (\j,\i) rectangle ++(1,-1);
    \node at (\j+0.5, \i-0.5) {$w_{\j, \i}$};
  }
}

\node at (\m+1.5,-0.5) {$i$};
\node at (0.35,-0.5 + \n + 1) {$j$};

\begin{scope}[shift={(1,-1)}]
  \draw[fill=C1] (3,6.75) rectangle ++(0.5,-0.5);
  \node[anchor=west] at (3.5,6.5) {$\pi$};
  \draw[fill=C2] (4.5,6.75) rectangle ++(0.5,-0.5);
  \node[anchor=west] at (5,6.5) {$\chi$};
\end{scope}
\end{scope}

\begin{scope}[shift={(9,0)}]

  \foreach \x/\y in {1/1, 2/1, 3/1, 4/1, 5/1, 6/1, 7/1} {
  \fill[C1] (\x/2,\y/2) rectangle ++(1/2,-1/2);
  
}

  \foreach \x/\y in {1/2, 2/2, 3/2, 3/2, 4/2, 5/2, 6/2, 7/2} {
  \fill[C2] (\x/2,\y/2) rectangle ++(1/2,-1/2);
  
}

  \foreach \x/\y in {1/3, 2/3, 3/3, 3/3, 4/3, 5/3, 6/3, 7/3} {
  \fill[C3] (\x/2,\y/2) rectangle ++(1/2,-1/2);
  
}

\foreach \x/\y in {1/1, 2/1, 3/1, 3/2, 3/3, 3/4, 3/5} {
  \fill[C1] (5+ \x/2,\y/2) rectangle ++(1/2,-1/2);
  
}

  \foreach \x/\y in {1/2, 2/2, 2/3, 2/4, 2/5, 2/6, 3/6} {
  \fill[C2] (5+ \x/2,\y/2) rectangle ++(1/2,-1/2);
  
}

  \foreach \x/\y in {1/3, 1/4, 1/5, 1/6, 1/7, 2/7, 3/7} {
  \fill[C3] (5+ \x/2,\y/2) rectangle ++(1/2,-1/2);
  
}

\foreach \i in {1,...,3}{
  \foreach \j in {1,...,7}{
    \draw[black] (\j/2,\i/2) rectangle ++(1/2,-1/2);
    \draw[black] (\i/2 +5, \j/2) rectangle ++(1/2,-1/2);
  }
}

\begin{scope}[shift={(1,-1)}]
  \draw[fill=C1] (1,5.5) rectangle ++(0.5,-0.5);
  \node[anchor=west] at (1.5,5.25) {$\pi_1$};
  \draw[fill=C2] (2.5,5.5) rectangle ++(0.5,-0.5);
  \node[anchor=west] at (3,5.25) {$\pi_2$};
  \draw[fill=C3] (4,5.5) rectangle ++(0.5,-0.5);
  \node[anchor=west] at (4.5, 5.25) {$\pi_3$};
\end{scope}
\end{scope}

\end{tikzpicture}
\caption{The left side depicts the array $W = (w_{i,j}: i,j \geq 1)$, an up-right path $\pi$ and a NE-chain $\chi$. The vertices in $\mathbb{Z}^2$ correspond to the midpoints of the squares and are not drawn. The right side depicts $\min(m,n)$ pairwise disjoint up-right paths, with $\pi_i$ connecting $(1,i)$ to $(m, n-k+i)$ that cover the whole $n \times m$ rectangle, for $(m,n) = (7,3)$ and $(m,n) = (3,7)$.} \label{Fig.Grid}
\end{figure}
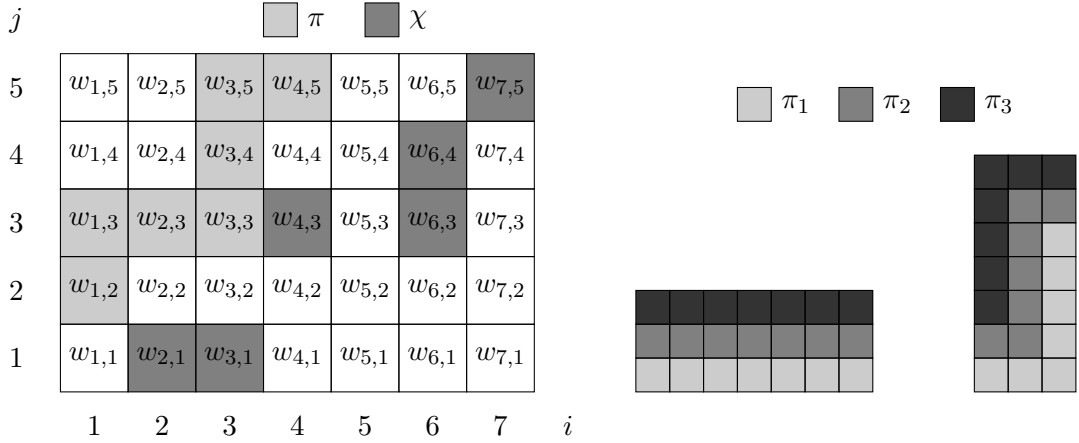

A {\em north-east (NE)-chain} $\chi$ is a (possibly empty) sequence $\chi = ((a_1,b_1), \dots, (a_r, b_r))$ with $a_i,b_i \in \mathbb{Z}$, and $a_{i-1} \leq a_i$, $b_{i-1} \leq b_i$, $(a_{i-1}, b_{i-1}) \neq (a_i, b_i)$. Analogously to (\ref{Eq.GDef}), we define for $k \geq 1$ 
\begin{equation}\label{Eq.HDef}
h_k(A) = \max_{\chi_1, \dots, \chi_k} [A(\chi_1) + \cdots + A(\chi_k)], \mbox{ where } A(\chi) = \sum_{v \in \chi} a_{v},
\end{equation}
and the maximum is over $k$-tuples of pairwise disjoint NE-chains. As up-right paths are in particular NE-chains, we have the following trivial inequalities for any real matrix $A$ and $k \geq 1$
\begin{equation}\label{Eq.GHIneq}
g_k(A) \leq h_k(A).
\end{equation}
When the entries of $A$ are non-negative we have the following stronger statement.
\begin{lemma}\label{L.GequalsH} Suppose that $A$ is a real $n\times m$ matrix with non-negative entries. If $g_k(A)$ are as in (\ref{Eq.GDef}) and (\ref{Eq.GDef2}), and $h_k(A)$ are as in (\ref{Eq.HDef}), then  
\begin{equation}\label{Eq.GHEqual}
g_k(A) = h_k(A) \mbox{ for all } k \geq 1.
\end{equation}
\end{lemma}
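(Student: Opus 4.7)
The inequality $g_k(A) \leq h_k(A)$ is already recorded in~(\ref{Eq.GHIneq}), so the task is to establish the reverse. I would dispose of the degenerate range $k \geq \min(m,n) + 1$ immediately: by~(\ref{Eq.GDef2}) we have $g_k(A) = \sum_{i,j} a_{i,j}$, whereas since $A$ has non-negative entries and pairwise disjoint NE-chains inside the $n \times m$ rectangle occupy pairwise disjoint vertex sets, any $k$-tuple of disjoint NE-chains has total weight at most $\sum_{i,j} a_{i,j}$, so $h_k(A) \leq g_k(A)$ in this range.

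For the main case $1 \leq k \leq \min(m,n)$, the plan is to establish the stronger geometric claim: for every $k$-tuple of pairwise disjoint NE-chains $(\chi_1, \ldots, \chi_k)$ in $[1,m] \times [1,n]$, there exist pairwise disjoint up-right paths $(\pi_1, \ldots, \pi_k)$, with $\pi_i$ joining $(1,i)$ to $(m, n-k+i)$, whose vertex union contains $\bigcup_i \chi_i$. Given this, non-negativity of $A$ yields $\sum_j A(\pi_j) \geq \sum_i A(\chi_i)$, and optimizing over $\chi$-tuples gives $g_k(A) \geq h_k(A)$. The construction proceeds in two steps. First, extend each $\chi_i$ to a provisional up-right path $\tilde{\pi}_i$ from $(1,i)$ to $(m, n-k+i)$ by inserting lattice-monotone staircases between successive vertices of $\chi_i$ and between the prescribed endpoints and the first/last vertex of $\chi_i$; this is always possible since $\chi_i \subseteq [1,m] \times [1,n]$. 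Second, apply a Lindstr\"om--Gessel--Viennot-style uncrossing procedure to the tuple $(\tilde{\pi}_1, \ldots, \tilde{\pi}_k)$ by swapping tails at shared vertices. Since the sources $(1,i)$ and sinks $(m, n-k+i)$ lie in matching monotone order along opposite boundaries of the rectangle, any such swap preserves the endpoint pairing; it also preserves the total vertex union, so the inclusion $\bigcup_i \chi_i \subseteq \bigcup_j \tilde{\pi}_j$ is maintained throughout.

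The main obstacle is verifying that the uncrossing terminates at a genuinely vertex-disjoint tuple $(\pi_1, \ldots, \pi_k)$: for monotone lattice paths, a single tail-swap at a shared vertex $v$ still leaves $v$ on both modified paths, so one needs a well-chosen monovariant (for instance the total lattice area enclosed between pairs of paths, strictly decreasing under swaps performed at the lowest-leftmost offending vertex) combined with an argument that any non-disjoint configuration admits such a reducing swap. An alternative route, which I expect to be cleaner, is a direct anti-diagonal sweep construction: vertices of $V := \bigcup_i \chi_i$ on any common anti-diagonal $a+b=s$ are NE-incomparable, hence belong to distinct chains, so there are at most $k$ of them; threading them onto paths by their relative position on each anti-diagonal and filling between consecutive anti-diagonals with carefully chosen unit steps produces the $\pi_j$ directly. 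Either approach reduces the crux to a local analysis between consecutive anti-diagonals, where the hypothesis $k \leq \min(m,n)$ provides exactly the slack needed to route $k$ disjoint staircases without collision.
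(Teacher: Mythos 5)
Your top-level strategy matches the paper's: the degenerate range $k\geq\min(m,n)+1$ is dispatched identically, and the crux is correctly identified as the combinatorial claim that the union of any $k$ pairwise disjoint NE-chains in the $n\times m$ rectangle is contained in the union of $k$ pairwise disjoint up-right paths with the prescribed endpoints. However, neither of your two proposed routes to this claim is close to a proof, and the first has a step that simply fails.

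In the first route, the opening move --- ``extend each $\chi_i$ to a provisional up-right path from $(1,i)$ to $(m,n-k+i)$'' --- is not always possible: any monotone lattice path with those endpoints has its second coordinate confined to the band $[i,n-k+i]$, while $\chi_i$ may contain a vertex with second coordinate $n>n-k+i$ (take $k\geq 2$, $i=1$, $\chi_1=\{(1,n)\}$). There is also no canonical assignment of chains to endpoint pairs, since disjoint NE-chains admit no linear ``above/below'' order (consider $\{(1,1),(3,3)\}$ and $\{(2,2)\}$); and, as you yourself note, the LGV tail-swap is a sign-reversing involution built for enumeration, not a disjointification procedure --- the ``monovariant'' that would turn it into one is precisely the missing content. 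In the second route, the anti-diagonal count is correct but insufficient: already for $k=1$, the set $V=\{(2,1),(1,3)\}$ meets every anti-diagonal in at most one vertex, yet lies on no single up-right path from $(1,1)$ to $(m,n)$. That $V$ is not a union of one NE-chain, so the claim you need is not threatened, but it shows the real work lies in exploiting the chain structure during the ``filling between anti-diagonals'' step, which your sketch leaves entirely unaddressed (Dilworth-type ``largest antichain $\leq k$'' is strictly stronger than ``$\leq k$ per anti-diagonal''). The paper closes exactly this gap with two structural lemmas that have no analogue in your proposal: Lemma~\ref{Lem.Layers} shows the union of $k$ disjoint NE-chains inside a Young diagram is covered by $\partial\lambda^1\sqcup\cdots\sqcup\partial\lambda^k$ for a nested sequence with $\lambda^i\subseteq\mathring{\lambda}^{i+1}$, and Lemma~\ref{Lem.OffDiag} shows that, after passing to an inclusion-maximal disjoint family, the union must contain the anti-diagonal corner vertices, which pins down where the boundary paths begin and end and permits the final rerouting into disjoint up-right paths with the required endpoints.
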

\begin{remark}\label{R.S21R1} Lemma \ref{L.GequalsH} appears to be known to experts; however, as we could not find its proof in the literature, we provide it in Appendix \ref{SectionA}.
\end{remark}
With the above notation, we can now define the higher-rank last passage times by
\begin{equation}\label{Eq.LPTk}
G_k(m,n) := g_k(W[m,n]) \mbox{ or, equivalently by Lemma \ref{L.GequalsH}, }  G_k(m,n) = h_k(W[m,n]),
\end{equation}
where $k \geq 1$ and $W[m,n] = (w_{i,j}: i = 1, \dots, m, j = 1, \dots, n)$.

%
%
\subsection{Half-space geometric LPP}\label{Section2.2} The half-space model depends on a sequence of real parameters $\{x_i\}_{i \geq 1}$ and a parameter $c$ such that $x_i \geq 0$, $c \geq 0$, $x_ix_j \in [0,1)$ and $cx_i \in [0,1)$. The background noise is given by an array $W = (w_{i,j}: i,j \geq 1)$, where $(w_{i,j}: 1\leq i \leq j)$ are independent geometric variables with $w_{i,j} \sim \mathrm{Geom}(x_ix_j)$ when $i \neq j$ and $w_{i,i} \sim \mathrm{Geom}(cx_i)$, and $w_{i,j} = w_{j,i}$ for all $i,j \geq 1$. In other words, $W$ has independent geometric entries, conditioned on the matrix being symmetric. 

In this setup we introduce the same higher-rank last passage times as in (\ref{Eq.LPTk}). One thing to notice here is that for the half-space model
\begin{equation}\label{Eq.SymmG}
G_k(m,n) = G_k(n,m) \mbox{ for all } k,m,n \geq 1.
\end{equation}
The latter follows from the statement $h_k(A^T) = h_k(A)$ for any real matrix $A$ (here $A^T$ denotes the transpose of $A$), which is immediate from the definition of NE-chains.

%
%
\subsection{Main results}\label{Section2.3} From the definition of NE-chains and (\ref{Eq.HDef}) one readily observes that if we set
\begin{equation}\label{Eq.DefLambda}
\lambda_{1}(m,n) = G_1(m,n) \mbox{ and } \lambda_k(m,n) = G_k(m,n) - G_{k-1}(m,n) \mbox{ for } k \geq 2,
\end{equation}
then $\lambda_k(m,n)$ are non-negative integers, $\lambda_k(m,n) = 0$ for $k > \min(m,n)$ and $\sum_{i = 1}^{\infty} \lambda_i(m,n) = \sum_{i = 1}^m \sum_{j = 1}^n w_{i,j}$. Additionally, one has that 
\begin{equation}\label{Eq.LambdaDec}
\lambda_{k}(m,n) \geq \lambda_{k+1}(m,n) \mbox{ for all } k \geq 1,
\end{equation}
which means that $\lambda(m,n):=(\lambda_{k}(m,n): k \geq 1)$ is a {\em partition} (a decreasing sequence of non-negative integers that is eventually zero). The latter follows from the following more general statement, which will be established in Section \ref{Section3.1} as an immediate consequence of the RSK correspondence and Greene's theorem in that section.
\begin{lemma}\label{L.HRSK} Suppose that $A$ is a real $n\times m$ matrix with non-negative entries. If $h_k(A)$ are as in (\ref{Eq.HDef}) and $h_0(A) = 0$, then  
\begin{equation}\label{Eq.HDec}
h_k(A) - h_{k-1}(A) \geq h_{k+1}(A) - h_k(A) \mbox{ for all } k \geq 1.
\end{equation}
\end{lemma}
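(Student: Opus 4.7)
The plan is to deduce the concavity inequality (\ref{Eq.HDec}) directly from two tools that will be imported from \cite[Section 4.1]{K06}: (i) a generalization of the RSK correspondence that associates to the non-negative real matrix $A$ a weakly decreasing sequence $\lambda(A) = (\lambda_1 \geq \lambda_2 \geq \cdots)$ of non-negative reals (the ``shape''), with $\lambda_k = 0$ for $k > \min(m,n)$, and (ii) a generalization of Greene's theorem asserting the identity
\[
h_k(A) = \lambda_1 + \lambda_2 + \cdots + \lambda_k \quad \mbox{for every } k \geq 1.
\]

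First I would invoke Krattenthaler's RSK correspondence to extract the shape $\lambda(A)$ from $A$; this produces the weakly decreasing object on which the rest of the argument rests. Next I would apply Krattenthaler's Greene-type theorem to re-express partial sums of $\lambda(A)$ as the quantities $h_k(A)$ appearing in the statement of the lemma. Combining these two inputs with the convention $h_0(A) = 0$, one obtains
\[
h_k(A) - h_{k-1}(A) = \lambda_k \quad \mbox{and} \quad h_{k+1}(A) - h_k(A) = \lambda_{k+1},
\]
so the inequality (\ref{Eq.HDec}) reduces to $\lambda_k \geq \lambda_{k+1}$, which is just the weakly decreasing property of the sequence $\lambda(A)$.

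Because this reduction is essentially a two-line deduction once the right statements are cited, I do not anticipate a serious obstacle in the argument itself. The only real care required is of a bookkeeping nature: one must confirm that the version of RSK and Greene's theorem in \cite[Section 4.1]{K06} applies to arbitrary non-negative real matrices (rather than only to integer or $\{0,1\}$-valued matrices), and that the ``shape'' produced there is indeed a weakly decreasing sequence of non-negative reals (eventually zero). Both of these points are part of Krattenthaler's setup and will be reviewed in Section \ref{Section3.1}, so the proof reduces to quoting the appropriate formulas from there.
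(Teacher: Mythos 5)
The core idea of your proposal coincides with the paper's: use Greene's theorem to identify $h_k(A) - h_{k-1}(A)$ with the $k$-th part of a partition (the RSK shape), whence the weakly decreasing property gives the claimed concavity. However, there is a genuine gap in the part you defer as ``bookkeeping.'' Krattenthaler's RSK correspondence and Greene's theorem from \cite[Section 4.1]{K06}, as reviewed in Section \ref{Section3.1} of this paper, are stated for \emph{integer} fillings: a filling of $Y(\gamma)$ is a map $f: Y(\gamma) \to \mathbb{Z}_{\geq 0}$, Algorithm F$^1$ manipulates integers, and the resulting shapes $\lambda^{(u,v)}$ are (integer) partitions. There is no version in that source that directly handles arbitrary non-negative real matrices, so the identity $h_k(A) = \lambda_1 + \cdots + \lambda_k$ with a weakly decreasing $\lambda$ is only available when $A$ has entries in $\mathbb{Z}_{\geq 0}$.

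To close the gap one needs two extra steps, which is precisely what the paper's proof supplies: first, for $a_{i,j} \in \mathbb{Q}_{\geq 0}$, multiply $A$ by a common denominator $d$ and use the homogeneity $h_k(dA) = d\, h_k(A)$ to reduce to the integer case; second, for $a_{i,j} \in \mathbb{R}_{\geq 0}$, approximate $A$ by rational matrices and pass to the limit using the continuity of $h_k$ in the entries (it is a maximum of finitely many linear functionals). Your proposal asserts that the real-valued version ``is part of Krattenthaler's setup,'' which is not the case, so as written the argument is incomplete; with the two reduction steps added, it becomes the paper's proof.
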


Our main results describe the joint distribution of $(\lambda(v_1), \dots, \lambda(v_r))$ for several $v_1, \dots, v_r \in \mathbb{Z}_{\geq 1}^2$, and in order to state them we require a bit more notation. 

A {\em down-right path} $\gamma$ is a sequence $\gamma = (v_0, \dots, v_{r})$ with $v_i - v_{i-1} \in \{(0,-1), (1,0)\}$. Note that any down-right path $\gamma$ can be identified with a word of length $r$, where the $i$-th letter is $D$ if $v_{i}-v_{i-1} = (0,-1)$ and is $R$ if $v_{i} - v_{i-1} = (1,0)$. If $\gamma = (v_0, \dots, v_{m+n})$ is a down-right path, with $v_0 = (0,n)$ and $v_{m+n} = (m,0)$, we define the set
\begin{equation}\label{Eq.FerresShapeDef}
Y(\gamma) = \left\{(i,j) \in \mathbb{Z}_{\geq 1}^2: (i+d, j+d) \in \gamma \mbox{ for some } d \geq 0 \right\},
\end{equation}
which can be visualized as the Ferrers shape, or equivalently Young diagram, in the first quadrant enclosed by the path $\gamma$, see the left side of Figure \ref{Fig.DownRightPath}.

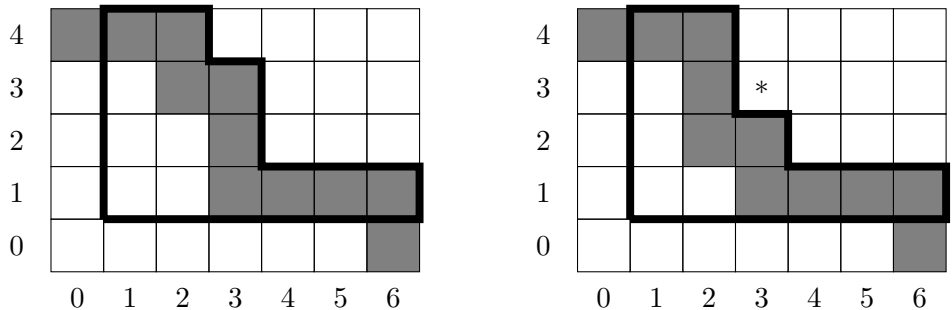
\begin{figure}[ht]
\centering
\begin{tikzpicture}[scale=0.7]

\def\m{7} 
\def\mm{6} 
\def\n{5} 
\def\nn{4} 
\def\k{3} 

\definecolor{Bg}{gray}{1.0}        
\definecolor{C1}{gray}{0.8} 
\definecolor{C2}{gray}{0.5} 
\definecolor{C3}{gray}{0.2} 

\begin{scope}[shift={(0,0)}]


  \foreach \j in {0,...,\mm}{
      \node at (\j+1.5, -0.5) {\(\j\)};
  }
  \foreach \j in {0,...,\nn}{
      \node at (0.35,-0.5 + \j+1) {\(\j\)};
  }

  \foreach \x/\y in {1/5, 2/5, 3/5, 3/4, 4/4, 4/3, 4/2, 5/2, 6/2, 7/2, 7/1} {
  \fill[C2] (\x,\y) rectangle ++(1,-1);
  
}

\foreach \i in {1,...,\n}{
  \foreach \j in {1,...,\m}{
    \draw[black] (\j,\i) rectangle ++(1,-1);
  }
}

  \draw[ line width=3pt]
    (2,1) -- (8, 1) -- (8,2) -- (5,2) -- (5,4) -- (4,4) -- (4,5) -- (2, 5) -- (2,1) ;

\end{scope}

\begin{scope}[shift={(10,0)}]


  \foreach \j in {0,...,\mm}{
      \node at (\j+1.5, -0.5) {\(\j\)};
  }
  \foreach \j in {0,...,\nn}{
      \node at (0.35,-0.5 + \j+1) {\(\j\)};
  }

  \foreach \x/\y in {1/5, 2/5, 3/5, 3/4, 3/3, 4/3, 4/2, 5/2, 6/2, 7/2, 7/1} {
  \fill[C2] (\x,\y) rectangle ++(1,-1);
  
}

\node at (4.5,3.5) {$*$};

\foreach \i in {1,...,\n}{
  \foreach \j in {1,...,\m}{
    \draw[black] (\j,\i) rectangle ++(1,-1);
  }
}

  \draw[ line width=3pt]
    (2,1) -- (8, 1) -- (8,2) -- (5,2) -- (5,3) -- (4,3) -- (4,5) -- (2, 5) -- (2,1) ;

\end{scope}

\end{tikzpicture}
\caption{The left side depicts a down-right path $\gamma$ that connects $(0,n)$ to $(m,0)$ for $n = 4$, $m = 6$. The set $Y(\gamma)$ in (\ref{Eq.FerresShapeDef}) is enclosed by bold black lines. The word corresponding to $\gamma$ is $RRDRDDRRRD$. The right side depicts the down-right path $\gamma'$ obtained from $\gamma$ by replacing the second $RD$ in the word corresponding to $\gamma$ with $DR$. The star indicates the unique vertex in $Y(\gamma) \setminus Y(\gamma')$.} \label{Fig.DownRightPath}
\end{figure}

Given two partitions $\lambda, \mu$, we say that they {\em interlace} (denoted $\lambda \succeq \mu$ or $\mu \preceq \lambda$) if $\lambda_1 \geq \mu_1 \geq \lambda_2 \geq \mu_2 \geq  \cdots$. For two partitions $\lambda, \mu$ the {\em skew Schur polynomial} in a single variable $x$ is given by
\begin{equation}\label{Eq.DefSchur}
s_{\lambda/\mu}(x) = {\bf 1}\{\lambda \succeq \mu\} \cdot x^{\sum_{i \geq 1} (\lambda_i - \mu_i)}. 
\end{equation}
We denote the {\em zero} (or {\em empty}) partition by $\emptyset$.

With the above notation in place we can state our main result about the full-space model.
\begin{theorem}\label{T1} Suppose that $W$ is as in Section \ref{Section2.1}, and $(\lambda(m,n): m,n \geq 1)$ are as in (\ref{Eq.DefLambda}) for $G_k(m,n)$ as in (\ref{Eq.LPTk}). In addition, set $\lambda(m,n) = \emptyset$ when $m = 0$ or $n = 0$. Finally, fix any $M, N \geq 0$ and a down-right path $\gamma = (v_0, \dots, v_{M+N})$ from $(0,N)$ to $(M,0)$, whose corresponding word is $s_1s_2\cdots s_{M+N}$. Then, for any partitions $\lambda^0, \dots, \lambda^{M+N}$ we have that 
\begin{equation}\label{Eq.FSDistr}
\mathbb{P}(\lambda(v_i) = \lambda^i \mbox{ for } i = 0, \dots, M+N) = Z \cdot {\bf 1}\left\{\lambda^0 = \lambda^{M+N} = \emptyset \right\} \cdot \prod_{i = 1}^{M+N} Q(\lambda^{i-1}, \lambda^{i}),
\end{equation}
where $Z = \prod_{(a,b) \in Y(\gamma)} (1-x_ay_b)$, and
\begin{equation}\label{Eq.DefQ} 
Q(\lambda^{i-1}, \lambda^{i}) = \begin{cases}  s_{\lambda^{i-1}/\lambda^{i}}(y_t) &\mbox{ if } v_{i-1} = (a,t) \mbox{ for some $a \in \mathbb{Z}$ and } s_i = D, \\  s_{\lambda^i/\lambda^{i-1}}(x_t) & \mbox{ if } v_{i-1} = (t-1,b) \mbox{ for some $b \in \mathbb{Z}$ and }  s_i = R. \end{cases}
\end{equation}
\end{theorem}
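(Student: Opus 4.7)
The plan is to derive Theorem \ref{T1} as an essentially direct consequence of the growth-diagram RSK correspondence and the associated generalization of Greene's theorem for non-negative integer arrays on Ferrers shapes, both available in \cite[Section 4.1]{K06}. The first observation is that only the sub-array $W^{(\gamma)} = (w_{i,j} : (i,j) \in Y(\gamma))$ is relevant: the set $Y(\gamma)$ is downward closed in $\mathbb{Z}_{\geq 1}^2$ (a short check using that along a down-right path the quantity $x - y$ is strictly increasing, so each diagonal $\{x - y = c\}$ meets $\gamma$ in exactly one point), so for any vertex $v_i = (m,n) \in \gamma$ with $m, n \geq 1$ the rectangle $[1,m] \times [1,n]$ is contained in $Y(\gamma)$, and consequently $\lambda(v_i)$ depends only on $W^{(\gamma)}$; the boundary cases with $m = 0$ or $n = 0$ are handled by the convention $\lambda(v_i) = \emptyset$.

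Next, I would apply Krattenthaler's growth diagram construction to the array $W^{(\gamma)}$: this assigns a partition $\mu(v)$ to every lattice vertex $v$ in the closure of $Y(\gamma)$, with $\mu(v) = \emptyset$ whenever $v$ lies on either coordinate axis and with the remaining partitions determined from the cell weights $w_{i,j}$ by local growth rules. In particular, the sequence along $\gamma$ satisfies $\mu(v_0) = \mu(v_{M+N}) = \emptyset$ together with $\mu(v_{i-1}) \preceq \mu(v_i)$ across each $R$ step and $\mu(v_{i-1}) \succeq \mu(v_i)$ across each $D$ step. Krattenthaler's generalization of Greene's theorem identifies $\mu(v_i)$ with the partition whose $k$-th part equals $h_k(W[m,n]) - h_{k-1}(W[m,n])$, so by Lemma \ref{L.GequalsH} together with (\ref{Eq.DefLambda}) and (\ref{Eq.LPTk}) this is exactly our $\lambda(v_i)$. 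The bijection is moreover weight preserving, in the sense that
\[
\prod_{(i,j) \in Y(\gamma)} (x_i y_j)^{w_{i,j}} \;=\; \prod_{i=1}^{M+N} Q(\mu(v_{i-1}), \mu(v_i)),
\]
with $Q$ exactly as in (\ref{Eq.DefQ}). Multiplying by the product of geometric normalizers $Z = \prod_{(a,b) \in Y(\gamma)} (1 - x_a y_b)$ produces $\mathbb{P}(W^{(\gamma)} = A)$ on the left and $Z \cdot \prod_{i=1}^{M+N} Q(\mu(v_{i-1}),\mu(v_i))$ on the right; summing over pre-images of a fixed target sequence $(\lambda^i)$ then yields (\ref{Eq.FSDistr}).

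The main step that requires care is the bookkeeping involved in translating Krattenthaler's conventions (row/column orientation and the precise way single-variable Schur polynomials appear in the local weights) into the one-step weights $Q$ of (\ref{Eq.DefQ}); specifically, one must confirm that an $R$-step entering column $t$ contributes the variable $x_t$ while a $D$-step leaving row $t$ contributes $y_t$. Once this identification is in place, the distributional identity follows immediately by combining the Greene-type identification of $\mu(v_i)$ with $\lambda(v_i)$ and the weight-preserving property of the RSK bijection.
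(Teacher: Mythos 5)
Your proof is correct and takes a genuinely different route from the paper's. The paper proves \eqref{Eq.FSDistr} by induction on $|Y(\gamma)|$: it finds a local corner $RD$ in the word, flips it to $DR$ to peel off one box, and then establishes the inductive step via a single application of Algorithm F$^1$, using the bijectivity of F$^1$ and the mass-preservation identity \eqref{Eq.MassPres}. You instead apply the growth-diagram bijection (Proposition \ref{P.RSKFS}) and Greene's theorem (Proposition \ref{P.GreeneFS}) globally to the whole shape $Y(\gamma)$ at once, and reduce everything to a single weight-preservation identity
$\prod_{(a,b)\in Y(\gamma)} (x_a y_b)^{w_{a,b}} = \prod_{i=1}^{M+N} Q(\mu(v_{i-1}),\mu(v_i))$, after which the result follows by pushing forward the product-geometric law under the bijection. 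Both arguments rest on the same two inputs from \cite[Section 4.1]{K06}, so they are morally equivalent; your version is shorter and more transparent conceptually, while the paper's inductive version keeps all verifications local (one box, one application of F$^1$, one use of \eqref{Eq.MassPres}) and, more importantly, transfers cleanly to the half-space Theorem~\ref{T2}, where the diagonal boxes behave differently and a purely global weight-preservation statement would need an extra case analysis.

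The one place you should flesh out is the weight-preservation identity itself, which you state but describe as ``bookkeeping.'' It does follow quickly from what you already have: by Greene's theorem, $|\mu(v)| = \sum_{1\le a\le m,\ 1\le b\le n} w_{a,b}$ for $v=(m,n)$, and since each diagonal $\{x-y=c\}$ meets $\gamma$ exactly once, the unique $R$-step entering column $t$, say from $v_{i-1}=(t-1,b)$ to $v_i=(t,b)$, has $b$ equal to the height of column $t$ in $Y(\gamma)$; hence the exponent $|\mu(v_i)|-|\mu(v_{i-1})|$ of $x_t$ equals $\sum_{b'} w_{t,b'}$ summed over $(t,b')\in Y(\gamma)$, and symmetrically for $D$-steps and $y_t$. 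Writing out this two-line computation would make the argument self-contained. You should also note explicitly that when the interlacing conditions fail, both sides of \eqref{Eq.FSDistr} vanish (the skew Schur factors in $Q$ contain the indicator in \eqref{Eq.DefSchur}, and the bijection produces no pre-image), and that $\lambda(v_0)=\lambda(v_{M+N})=\emptyset$ since these vertices lie on the axes, matching the indicator $\mathbf 1\{\lambda^0=\lambda^{M+N}=\emptyset\}$.
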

\begin{remark}\label{R.T1R1} The measures on the right side of (\ref{Eq.FSDistr}) are special cases of the Schur processes from \cite{OR03}. Indeed, let $\mathbb{S}$ be the distribution on sequences of partitions $(\lambda, \mu)$, such that 
\begin{equation}\label{Eq.SeqPart}
\emptyset \subset \lambda^{(1)} \supset \mu^{(1)} \subset \lambda^{(2)} \supset \mu^{(2)} \subset \cdots \supset \mu^{(M+ N)} \subset \lambda^{(M + N +1)} \supset \emptyset,
\end{equation}
from \cite[Definition 3]{B11} with $N$ replaced with $M + N + 1$, $\rho_0^+ = 0 =\rho^-_{M+N+1}$, and for $i = 1, \dots, M+N$  
\begin{equation}\label{Eq.Specializations}
\rho_i^+ = \begin{cases} x_t & \mbox{ if } v_{i-1} \in \{t-1\} \times \mathbb{Z}  \mbox{ and $s_i = R$},\\ 0 & \mbox{ otherwise}; \end{cases} \hspace{2mm}  \rho^-_{i} =  \begin{cases} y_t & \mbox{ if } v_{i-1} \in \mathbb{Z} \times \{t\} \mbox{ and $s_i = D$}, \\ 0 & \mbox{ otherwise}. \end{cases}
\end{equation}
All above specializations $\rho^{\pm}_i$ are in a single variable. Then, the right side of (\ref{Eq.FSDistr}) agrees with
$$\mathbb{S} \left( \lambda^{(i)} = \lambda^{i-1} \mbox{ for } i = 1, \dots, M + N+1 \right).$$
\end{remark}
\begin{remark}\label{R.T1R2} Theorem \ref{T1} has appeared previously under various different assumptions on $\gamma$ and the parameters. The closest statement we could find is \cite[Theorem 5.2]{johansson2006random}, which with substantial effort can be seen to be equivalent to Theorem \ref{T1} although it is written with a different notation and proved by different means. A related result connecting RSK and Schur processes can be found in \cite[Theorem 3.2]{betea2018perfect}, although that paper does not discuss LPP.
\end{remark}

We next turn to the half-space setting, where in view of (\ref{Eq.SymmG}), we have that $\lambda(m,n) = \lambda(n,m)$ for all $m, n \geq 1$. Consequently, it suffices to describe the joint distribution of $\lambda(m,n)$ along a down-right path contained in the region $m \geq n$. The exact statement is as follows.

\begin{theorem}\label{T2} Suppose that $W$ is as in Section \ref{Section2.2}, and $(\lambda(m,n): m,n \geq 1)$ are as in (\ref{Eq.DefLambda}) for $G_k(m,n)$ as in (\ref{Eq.LPTk}). In addition, set $\lambda(m,n) = \emptyset$ when $m = 0$ or $n = 0$. Finally, fix any $M, N \geq 0$ and a down-right path $\gamma = (v_0, \dots, v_{M+N})$ from $(N,N)$ to $(M+N,0)$, whose corresponding word is $s_1s_2\cdots s_{M+N}$. Then, for any partitions $\lambda^0, \dots, \lambda^{M+N}$ we have that 
\begin{equation}\label{Eq.HSDistr}
\mathbb{P}(\lambda(v_i) = \lambda^i \mbox{ for } i = 0, \dots, M+N) = Z \cdot \tau_{\lambda^0}(c) \cdot {\bf 1}\left\{ \lambda^{M+N} = \emptyset \right\} \cdot \prod_{i = 1}^{M+N} Q(\lambda^{i-1}, \lambda^{i}),
\end{equation}
where $Q(\lambda^{i-1}, \lambda^i)$ are as in (\ref{Eq.DefQ}) with $y_i = x_i$, $\tau_{\lambda}(c) = c^{\lambda_1 - \lambda_2 + \lambda_3 - \lambda_4 + \cdots}$, and $Z$ is a normalization constant. If we denote $Y_{<}(\gamma) = \{(a,b) \in Y(\gamma): a < b \}$ with $Y(\gamma)$ as in (\ref{Eq.FerresShapeDef}), then 
$$Z = \prod_{i = 1}^N (1- cx_i) \cdot \prod_{(a,b) \in Y_{<}(\gamma)} (1- x_ax_b).$$
\end{theorem}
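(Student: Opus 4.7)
My plan is to derive Theorem \ref{T2} by applying Krattenthaler's symmetric Robinson-Schensted-Knuth correspondence and symmetric Greene's theorem from \cite[Section 4.1]{K06}, in direct parallel with the approach to Theorem \ref{T1}. The new ingredient is a special local rule on the diagonal of the growth diagram, which will produce the factor $\tau_{\lambda^0}(c)$ in (\ref{Eq.HSDistr}) and the $\prod_{i=1}^N(1-cx_i)$ part of $Z$.

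First, set $L = M+N$ and apply the symmetric Fomin growth algorithm to the symmetric matrix $(w_{i,j})_{1 \leq i,j \leq L}$. This assigns a partition $\mu(a,b)$ to each vertex $(a,b) \in \{0,\ldots,L\}^2$, satisfying $\mu(a,b)=\mu(b,a)$, $\mu(a,0)=\mu(0,b)=\emptyset$, interlacings along edges, and symmetric local rules at cells (the diagonal cells carrying an extra $c$-dependence). By Krattenthaler's symmetric Greene theorem, $\mu(m,n) = \lambda(m,n)$ for the LPP-derived partitions of (\ref{Eq.DefLambda}), so the joint law of $(\lambda(v_i))$ equals that of $(\mu(v_i))$.

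Next, I would write $\mathbb{P}(\mu(v_i)=\lambda^i \text{ for all } i)$ as a sum over symmetric non-negative integer matrices $A = (a_{i,j})$ consistent with the event, weighted by $\prod_i(1-cx_i)(cx_i)^{a_{ii}}\prod_{i<j}(1-x_ix_j)(x_ix_j)^{a_{ij}}$. The symmetric growth-diagram bijection decomposes this sum into contributions from individual cells: off-diagonal cells $(a,b)$ with $a \neq b$ contribute standard Fomin skew-Schur weights in a single variable (exactly as in the argument for Theorem \ref{T1}), while diagonal cells $(a,a)$, summed over $a_{a,a}$, produce $c$-dependent factors that chain along the diagonal $a=1,\ldots,N$ to yield the factor $\tau_{\lambda^0}(c)$ attached to the vertex $\mu(N,N)=\lambda^0$.

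Finally, the partitions $\mu(a,b)$ that are not constrained by the event --- those not on the path $\gamma$ and not on the diagonal segment from $(0,0)$ to $(N,N)$ --- are summed out cell by cell via single-variable Cauchy identities, each eliminated cell contributing a factor of $1/(1-x_ax_b)$ or $1/(1-cx_a)$. What survives is $\prod_i Q(\lambda^{i-1},\lambda^i)$ from the cells along $\gamma$, the factor $\tau_{\lambda^0}(c)$ from the diagonal chain, the indicator ${\bf 1}\{\lambda^{M+N}=\emptyset\}$ from the boundary condition at the endpoint of $\gamma$, and the normalization $Z$ collecting the surviving $(1-\cdot)$ factors. The main obstacle I expect is the bookkeeping of the diagonal local rule: one must verify that the cumulative $c$-exponent along the diagonal is exactly $\lambda^0_1-\lambda^0_2+\lambda^0_3-\cdots$, which amounts to relating Krattenthaler's symmetric local rule on diagonal cells to the odd-column count for $\tau_{\lambda^0}(c)$, and dually to showing that the Cauchy cancellations leave precisely the claimed set of $(1-x_ax_b)$ factors in $Z$.
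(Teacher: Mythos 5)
Your proposal is built on the same key inputs as the paper's proof --- Krattenthaler's symmetric RSK correspondence and symmetric Greene's theorem (Corollaries \ref{C.RSKHS} and \ref{P.GreeneHS}), the local rule F$^1$, and the telescoping identity for the $c$-exponent along the diagonal (which is exactly equation (\ref{Eq.MassPres2}) in the paper) --- but you organize the argument differently. The paper proceeds by induction on the number of boxes in $Y(\gamma)$, with one cell added per step and two cases (an off-diagonal box removed by flipping an $RD$ to $DR$, and a diagonal box removed by moving the starting vertex from $(N,N)$ to $(N-1,N-1)$); at each step only one new partition $\rho$ and one new weight $m$ are summed out, and the F$^1$ bijection collapses the double sum to a single term. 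You instead propose to write the probability as a single global expansion over the filling and then decompose the exponents cell by cell. That is a legitimate alternative route, but it pushes all the bookkeeping into one global statement that is harder to verify than the paper's local induction.

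Two specific points in your plan need repair. First, you say the partitions $\mu(a,b)$ at interior cells are ``summed out cell by cell via single-variable Cauchy identities, each eliminated cell contributing a factor of $1/(1-x_ax_b)$ or $1/(1-cx_a)$.'' With single-variable specializations there is nothing to sum: by Corollary \ref{C.RSKHS} the boundary sequence $(\lambda^0,\dots,\lambda^{M+N})$ determines the symmetric filling uniquely, hence determines every interior $\mu(a,b)$ as well, so the ``sum over fillings consistent with the event'' has exactly one term. The $(1-x_ax_b)$ and $(1-cx_a)$ factors that assemble into $Z$ are simply the normalizing constants of the geometric probability mass functions of the weights in $Y(\gamma)$, not the result of Cauchy summation, and they appear in the numerator, not as reciprocals. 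Second, you correctly flag that the main obstacle is verifying that the diagonal $c$-exponent equals $\lambda^0_1-\lambda^0_2+\lambda^0_3-\cdots$; the tool for this is precisely equation (\ref{Eq.MassPres2}), which shows $\tau_{\mathrm{F}^1(\rho,m)}(c) = c^m \tau_\rho(c)$ when $\mu=\nu$, so the diagonal factors telescope to $\tau_{\lambda^0}(c)$. You should state this lemma explicitly rather than leave it as an expectation. Apart from these fixes, your global decomposition would prove the theorem, but the paper's one-cell-at-a-time induction is the cleaner way to carry it out for arbitrary down-right paths.
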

\begin{remark}\label{R.T2R1} The measures on the right side of (\ref{Eq.HSDistr}) are special cases of the Pfaffian Schur processes from \cite{BR05}. Indeed, let $\mathbb{S}^{\mathrm{hs}}$ be the distribution on sequences of partitions $(\lambda, \mu)$ as in (\ref{Eq.SeqPart}) from \cite[Section 3]{BR05} with $n = M + N + 1$, $\rho_0^+ = c$, $\rho^-_{M+N+1} = 0$, and $\rho^{\pm}_i$ for $i = 1, \dots, M+N$ as in (\ref{Eq.Specializations}) with $y_t = x_t$. Then, the right side of (\ref{Eq.HSDistr}) agrees with
$$\mathbb{S}^{\mathrm{hs}} \left( \lambda^{(i)} = \lambda^{i-1} \mbox{ for } i = 1, \dots, M + N+1 \right).$$
\end{remark}

Theorem \ref{T2} generalizes \cite[Proposition 3.10]{BBCS}, which establishes the distributional equality between $(\lambda_1(v))_{v \in \gamma}$ (or equivalently $(G_1(v))_{v \in \gamma}$) and the Pfaffian Schur process. The latter has also been done by other means in \cite[Section 4.2.1]{BBNV18}. It is worth mentioning that for the special case when the path $\gamma$ goes straight down, \cite[Remark 3.14]{BBCS} sketches an argument of how to prove the distributional equality in Theorem \ref{T2}. Our proof formalizes that outline and generalizes it to arbitrary down-right paths. Using a version of Greene's theorem, see Corollary \ref{P.GreeneHS}, one can show that Theorem \ref{T2} is equivalent to \cite[Proposition 4.4]{BBNV18}, which was stated without proof. 

While the proofs of Theorems \ref{T1} and \ref{T2}, which are given in Section \ref{Section4}, rely on a simple inductive argument, at a high level they can be viewed as applying a certain Markovian dynamics that preserves Schur processes. This dynamics was used in \cite{betea2018perfect, BBNV18} and is of what is called {\em RSK-type}. It is different from the {\em push-block} dynamics that was used to prove \cite[Proposition 3.10]{BBCS}. We refer the interested reader to \cite{MP17} for background on RSK-type and push-block dynamics for Schur processes and their generalizations. 

%
%
\section{RSK fillings and Greene's theorem}\label{Section3} In this section we recall the generalizations of the RSK correspondence and Greene's theorem as defined by Krattenthaler in \cite[Section 4.1]{K06}, which are based on Fomin's growth diagrams \cite{F86,F95a,F95b}. In the general setup, described in Section \ref{Section3.1}, the RSK is a one-to-one correspondence between integer fillings of Ferrers shapes and sequences of interlacing partitions, and Greene's theorem provides a global description of this bijection in terms of NE-chains. We also explain how these results specialize to the setting of symmetric fillings of symmetric Ferrers shapes in Section \ref{Section3.2}.

%
%
\subsection{General setup}\label{Section3.1} We recall that partitions, interlacing and down-right paths were defined in Section \ref{Section2.3}. We start with the following algorithms, which for given partitions $\mu, \nu$ define the forward and backward maps of a bijection between pairs $(\rho, m)$ and $\lambda$. Here, $\rho$ is a partition such that $\rho \preceq \mu, \rho \preceq \nu$ and $m$ is a non-negative integer, and $\lambda$ is a partition such that $\lambda \succeq \mu, \lambda \succeq \nu$. 

\vspace{2mm}
\begin{minipage}{15cm}
{\bf \raggedleft Algorithm F$^1$} in \cite[Section 4.1]{K06}.
\begin{enumerate}
    \item[Step 0.] Set $\carry := m$ and $i:= 1$.
    \item[Step 1.] Set $\lambda_i := \max(\mu_i, \nu_i) + \carry$.
    \item[Step 2.] If $\lambda_i = 0$, then stop. The output is $\lambda = (\lambda_1, \lambda_2, \dots, \lambda_{i-1}, 0, \dots)$. If not, then set $\carry:= \min(\mu_i, \nu_i) - \rho_i$ and $i :=i+1$. Go to Step 1. 
\end{enumerate}
\end{minipage}

\vspace{2mm}
\begin{minipage}{15cm}
{\bf \raggedleft Algorithm B$^1$} in \cite[Section 4.1]{K06}.
\begin{enumerate}
    \item[Step 0.] Set $i:= \max\{j: \lambda_j > 0\}$ and $\carry:= 0$.
    \item[Step 1.] Set $\rho_i := \min(\mu_i, \nu_i) - \carry$.
    \item[Step 2.] Set $\carry:=\lambda_i - \max(\mu_i, \nu_i)$ and $i:=i-1$. If $i = 0$, then stop. The output of the algorithm is $\rho = (\rho_1, \rho_2, \dots)$ and $m = \carry$. If not, go to Step 1.
\end{enumerate}
\end{minipage}
\vspace{2mm}

\begin{remark} Observe that for the above algorithms we have
$$\lambda_1 = \max(\mu_1, \nu_1) + m, \mbox{ and } \lambda_i = \max(\mu_i, \nu_i) + \min(\mu_{i-1}, \nu_{i-1}) - \rho_{i-1} \mbox{ for } i \geq 2.$$
Summing the latter over $i$ gives
\begin{equation}\label{Eq.MassPres}
\sum_{i \geq 1} (\lambda_i + \rho_i) = m + \sum_{i \geq 1} (\mu_i + \nu_i).
\end{equation} 
In addition, if $\mu = \nu$, we have
\begin{equation}\label{Eq.MassPres2}
\sum_{i \geq 1} (\lambda_{2i-1} - \lambda_{2i}) = (m + \mu_1) - (\mu_2 + \mu_1 - \rho_1) + (\mu_3 + \mu_2 - \rho_2) - \cdots = m + \sum_{i \geq 1} (\rho_{2i - 1} - \rho_{2i}).
\end{equation} 
\end{remark}

Suppose that $m, n \geq 0$, and we are given a down-right path $\gamma = (v_0, \dots, v_{m+n})$, where $v_0 = (0,n)$ and $v_{m+n} = (m,0)$. Recall that any such path is encoded by a word $s_1s_2\cdots s_{m+n}$ of $n$ letters $D$ and $m$ letters $R$. Recall also the set $Y(\gamma)$ from (\ref{Eq.FerresShapeDef}), which one can visualize as a Ferrers shape in $\mathbb{Z}_{\geq 1}^2$, see Figure \ref{Fig.DownRightPath}. 

A {\em filling} of $Y(\gamma)$ is a map $f: Y(\gamma) \rightarrow \mathbb{Z}_{\geq 0}$. Given a filling $f$ of $Y(\gamma)$, we can use Algorithm F$^1$ to construct partitions $\{\lambda^{(u,v)}: (u,v) \in Y(\gamma)\}$ as follows. We take any sequence of down-right paths $\gamma_0, \gamma_1, \dots, \gamma_r$ connecting $(0,n)$ to $(m,0)$, such that $Y(\gamma_0) = \emptyset$, $\gamma_r = \gamma$, and 
$$Y(\gamma_i) \subset Y(\gamma_{i+1}), \hspace{3mm} |Y(\gamma_{i+1}) \setminus Y(\gamma_{i})| = 1.$$
In words, we are taking a sequence of paths whose enclosed Ferrers shapes are growing by one box at each step. If $(m_i,n_i)$ is the vertex in $Y(\gamma_i)$ that is not in $Y(\gamma_{i-1})$, we let $\lambda^{(m_i,n_i)}$ be the output of applying Algorithm F$^1$ to $\rho = \lambda^{(m_i-1, n_i-1)}$, $\nu = \lambda^{(m_i-1, n_i)}$, $\mu = \lambda^{(m_i, n_i-1)}$, and $m = f(m_i, n_i)$. This construction is initiated by setting $\lambda^{(m,n)} = \emptyset$ if $m = 0$ or $n = 0$. It is not hard to see from the local nature of Algorithm F$^1$ that the obtained $\{\lambda^{(u,v)}: (u,v) \in Y(\gamma)\}$ does not depend on the sequence of down-right paths $\gamma_0, \gamma_1, \dots, \gamma_r$ as long as they satisfy the above conditions. We denote the sequence $(\lambda^{v_0}, \dots, \lambda^{v_{m+n}})$ by $\rsk_{\gamma}(f)$. 

With the above notation we can state the RSK correspondence from \cite[Theorem 7]{K06}.
\begin{proposition}\label{P.RSKFS} The map $\rsk_{\gamma}$ defines a bijection between fillings of $Y(\gamma)$ and sequences of partitions $(\emptyset = \lambda^{0}, \dots, \lambda^{m+n} = \emptyset)$, such that $\lambda^{i} \succeq \lambda^{i-1}$ if $s_i = R$, whereas $\lambda^{i} \preceq \lambda^{i-1}$ if $s_i = D$.  
\end{proposition}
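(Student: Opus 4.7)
The plan is to reduce the bijection to the combinatorial fact that Algorithm F$^1$ is locally inverted by Algorithm B$^1$, and then to construct the inverse of $\rsk_{\gamma}$ by iteratively shrinking the Ferrers shape. First, I would verify the local bijection: for fixed partitions $\mu, \nu$, Algorithm F$^1$ maps $\{(\rho, m) : \rho \preceq \mu, \rho \preceq \nu, m \in \mathbb{Z}_{\geq 0}\}$ bijectively onto $\{\lambda : \lambda \succeq \mu, \lambda \succeq \nu\}$, with inverse Algorithm B$^1$. Unrolling the recursion in F$^1$ yields $\lambda_1 = \max(\mu_1,\nu_1) + m$ and $\lambda_i = \max(\mu_i,\nu_i) + \min(\mu_{i-1}, \nu_{i-1}) - \rho_{i-1}$ for $i \geq 2$; from $\rho \preceq \mu, \nu$ one checks directly that $\lambda$ is weakly decreasing, eventually zero, and dominates both $\mu$ and $\nu$. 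Plugging these formulas into B$^1$ recovers $(\rho, m)$, and conversely, running B$^1$ on an arbitrary $\lambda \succeq \mu, \nu$ produces a valid $(\rho, m)$ which F$^1$ sends back to $\lambda$.

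Next, I would confirm that the labels $\{\lambda^{(u,v)} : (u,v) \in Y(\gamma)\}$ in the growth construction are well-defined, independent of the chosen sequence $\gamma_0, \ldots, \gamma_r$. This follows from induction on $u+v$: the local rule F$^1$ determines $\lambda^{(u,v)}$ from $\lambda^{(u-1,v-1)}, \lambda^{(u-1,v)}, \lambda^{(u,v-1)}$ and $f(u,v)$, and these three corner labels sit at strictly smaller positions in the product order on $\mathbb{Z}_{\geq 1}^2$, so they are already determined by the initial boundary convention $\lambda^{(u,v)} = \emptyset$ for $u=0$ or $v=0$ together with $f$. The same induction shows that the interlacing hypothesis $\lambda^{(u-1,v-1)} \preceq \lambda^{(u-1,v)}, \lambda^{(u,v-1)}$ required for F$^1$ to apply is preserved throughout, since F$^1$ at the smaller boxes produces outputs dominating the relevant corner labels. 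One reads off the interlacing of $\rsk_{\gamma}(f)$ from the same setup: if $s_i = R$, the edge $v_{i-1} v_i$ is the top of the box $v_i \in Y(\gamma)$, so F$^1$ at that box forces $\lambda^{v_i} \succeq \lambda^{v_{i-1}}$, and the case $s_i = D$ is symmetric.

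Finally, I would explicitly invert $\rsk_\gamma$. Given a sequence $(\emptyset = \lambda^0, \ldots, \lambda^{m+n} = \emptyset)$ with the correct interlacing, I locate any substring $s_i s_{i+1} = RD$, corresponding to an outer corner $v_i$ of the current Ferrers shape, and apply B$^1$ with $\lambda = \lambda^{v_i}$, $\nu = \lambda^{v_{i-1}}$, $\mu = \lambda^{v_{i+1}}$ to produce $(\rho, m)$; I then set $f(v_i) := m$, replace the path segment $v_{i-1} \to v_i \to v_{i+1}$ by $v_{i-1} \to v_i - (1,1) \to v_{i+1}$ (swapping $RD$ to $DR$ in the word), and assign the label $\rho$ to the new vertex. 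The output conditions of B$^1$ ensure the shortened sequence still satisfies the required interlacing; iterating until the word becomes $D^n R^m$, so the path runs along the axes and $Y = \emptyset$, recovers the full filling $f$. The two maps are mutually inverse because each box of $Y(\gamma)$ is processed by a single matched pair of applications of F$^1$ and B$^1$. The main obstacle is the routine but careful verification that F$^1$ and B$^1$ produce valid outputs and are genuine local inverses; once this is in hand, the proposition follows formally from the local structure of Fomin's growth diagrams and the shrinking procedure above.
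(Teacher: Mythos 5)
The paper does not actually prove Proposition \ref{P.RSKFS} --- it is cited verbatim as Theorem 7 of Krattenthaler's \cite{K06} and used as a black box, so there is no in-text proof to compare against. That said, your argument is correct and is the standard proof from the Fomin growth-diagram framework that underlies \cite{K06}: establish that, for fixed $\mu,\nu$, Algorithm F$^1$ maps pairs $(\rho,m)$ with $\rho\preceq\mu,\nu$ and $m\geq 0$ bijectively onto partitions $\lambda\succeq\mu,\nu$ with B$^1$ as inverse; show by induction on $u+v$ that the labels $\lambda^{(u,v)}$ are well-defined, independent of the growth order, and satisfy the needed interlacing hypotheses at each corner; and invert $\rsk_\gamma$ by iteratively collapsing an outer corner $RD\mapsto DR$ via B$^1$ until the word becomes $D^nR^m$. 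The only places you leave implicit are routine: the boundary cases $u=1$ or $v=1$ (where the diagonal predecessor is $\emptyset$ and the interlacing hypothesis is trivial), the verification that B$^1$ lands in the claimed domain (one checks $\rho_i\in[\max(\mu_{i+1},\nu_{i+1}),\min(\mu_i,\nu_i)]$ and $m=\lambda_1-\max(\mu_1,\nu_1)\geq 0$ using $\lambda\succeq\mu,\nu$), and the termination argument for the shrinking procedure (each $RD\to DR$ swap strictly reduces the number of inversions in the word). Filling those in gives a complete proof, matching the approach in \cite{K06} that the authors rely on.
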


We next state the following generalization of Greene's theorem from \cite[Theorem 8]{K06}.
\begin{proposition}\label{P.GreeneFS} For a filling $f$ of $Y(\gamma)$ define $\{\mu^{(u,v)}_{i}: i \geq 1, (u,v) \in Y(\gamma) \}$ through the equations
\begin{equation}\label{Eq.GrFS}
\sum_{i = 1}^k \mu^{(u,v)}_i = h_k(f[u,v]), \mbox{ where } f[u,v] = (f(i,j): 1 \leq i \leq u, 1 \leq j \leq v),
\end{equation}
and $h_k$ are as in (\ref{Eq.HDef}). Then, $(\mu^{(u,v)}_i: i \geq 1) = \lambda^{(u,v)}$ for all $(u,v) \in Y(\gamma)$. In particular, in view of Proposition \ref{P.RSKFS},(\ref{Eq.GrFS}) defines a bijection between fillings of $Y(\gamma)$ and sequences of partitions $(\emptyset = \mu^{v_0}, \dots, \mu^{v_{m+n}} = \emptyset)$, such that $\mu^{v_i} \succeq \mu^{v_{i-1}}$ if $s_i = R$, whereas $\mu^{v_i} \preceq \mu^{v_{i-1}}$ if $s_i = D$. 
\end{proposition}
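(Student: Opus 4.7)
The plan is to induct on a growth sequence $\emptyset = Y(\gamma_0) \subset Y(\gamma_1) \subset \cdots \subset Y(\gamma_r) = Y(\gamma)$ whose shapes grow by exactly one cell at each step, with inductive claim $\lambda^{(u,v)} = \mu^{(u,v)}$ for every $(u,v) \in Y(\gamma_i)$. The base case $i = 0$ is immediate since both families are empty there.

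For the inductive step, let $(m,n)$ denote the cell added at step $i$. By the construction preceding Proposition~\ref{P.RSKFS}, $\lambda^{(m,n)}$ is the output of Algorithm~F$^1$ applied to $\rho = \lambda^{(m-1,n-1)}$, $\nu = \lambda^{(m-1,n)}$, $\mu = \lambda^{(m,n-1)}$, and $c = f(m,n)$; the inductive hypothesis equates these with $\mu^{(m-1,n-1)}$, $\mu^{(m-1,n)}$, $\mu^{(m,n-1)}$ respectively (using the convention $\lambda = \mu = \emptyset$ when $u = 0$ or $v = 0$). Summing the F$^1$-rules $\lambda_1 = \max(\mu_1,\nu_1)+c$ and $\lambda_j = \max(\mu_j,\nu_j)+\min(\mu_{j-1},\nu_{j-1})-\rho_{j-1}$ from the remark after Algorithm~B$^1$ and using $\max+\min = \mu_j+\nu_j$ at levels $1,\dots,k-1$, the entire inductive step reduces to the single combinatorial identity: for every $k \geq 1$, with $h_0 \equiv 0$,
\begin{equation}\label{KeyEq}
h_k(f[m,n])+h_{k-1}(f[m-1,n-1]) = f(m,n)+\max\bigl(h_k(f[m,n-1])+h_{k-1}(f[m-1,n]),\ h_k(f[m-1,n])+h_{k-1}(f[m,n-1])\bigr).
\end{equation}

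I would establish \eqref{KeyEq} by an uncrossing/exchange argument on NE-chain systems, which is the analytic heart of Greene's theorem. The cell $(m,n)$ lies in at most one chain of any $k$-tuple supported on $f[m,n]$ and contributes its weight $f(m,n)$ exactly once; after removing it, the remaining cells of that chain split into portions confined to $f[m,n-1]$ and $f[m-1,n]$. Paired with a $(k-1)$-tuple on $f[m-1,n-1]$, these can be redistributed via Lindstr\"om--Gessel--Viennot-type tail swaps at each crossing of chains from the two tuples until the resulting systems are supported on $\{f[m,n-1], f[m-1,n]\}$ in the required dimensions, while preserving total weight at every step. The reverse construction gives the opposite inequality.

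The main obstacle is executing the exchange rigorously and controlling which of the two terms on the right-hand side of \eqref{KeyEq} is actually attained, since its choice matches the sign of $\mu_k - \nu_k$ and the two candidates are genuine competitors depending on the geometry of the chains. One clean way to bypass the bookkeeping is to reduce to the classical Greene's theorem on full rectangles by choosing the growth sequence so that $(m,n)$ always lies at the outer corner of a rectangle, and to propagate the identity to general shapes by the local consistency of Fomin's growth rule. Once \eqref{KeyEq} is verified, the induction closes, and the ``In particular'' clause follows from Proposition~\ref{P.RSKFS} by reading off the bijection through the partitions $\mu^{v_i}$.
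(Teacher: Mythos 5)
The paper does not actually prove Proposition~\ref{P.GreeneFS}: it is imported verbatim as \cite[Theorem 8]{K06} and used as a black box (see the sentence immediately preceding the statement and Remark~\ref{R.S31}). So there is no ``paper's proof'' to match against; the relevant question is whether your proposal constitutes a valid independent proof. Your reduction is correct and is exactly the Fomin growth-diagram argument: the telescoping of the F$^1$ recurrences $\lambda_1=\max(\mu_1,\nu_1)+m$ and $\lambda_i = \max(\mu_i,\nu_i)+\min(\mu_{i-1},\nu_{i-1})-\rho_{i-1}$, together with $\max+\min=\mu_j+\nu_j$, does cleanly reduce the inductive step to your identity~\eqref{KeyEq}, and one can check that~\eqref{KeyEq} remains valid for indices past the algorithm's stopping point because the tail terms all vanish.

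The genuine gap is that you do not prove~\eqref{KeyEq}, and you explicitly acknowledge it as ``the main obstacle.'' This identity \emph{is} Greene's theorem in Fomin-local form: proving it is not a bookkeeping issue you can defer but the entire content of \cite[Theorem 8]{K06}. The ``Lindstr\"om--Gessel--Viennot tail swaps'' you invoke do not directly yield a max identity of this shape (LGV is about signed enumeration of nonintersecting path families), and the two-sided inequality needs a careful exchange argument that handles the asymmetry between the two candidates on the right-hand side. Your proposed fallback (``reduce to the classical Greene's theorem on full rectangles and propagate by local consistency of Fomin's growth rule'') is circular: the quantities $h_k(f[u,v])$, $h_k(f[u-1,v])$, $h_k(f[u,v-1])$, $h_k(f[u-1,v-1])$ already depend only on rectangles, so~\eqref{KeyEq} \emph{is} the rectangular statement, and asserting that the $h_k$'s satisfy Fomin's local rule is precisely what the proposition claims. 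To close the argument you would need either an actual exchange proof of~\eqref{KeyEq}, or to cite the rectangular Greene's theorem from \cite{K06} or \cite{Greene74} (after translating $h_k$ of the matrix into unions of weakly increasing subsequences of the associated biword) --- at which point the inductive machinery becomes unnecessary, since the proposition follows immediately by applying the rectangular result at each $(u,v)\in Y(\gamma)$.
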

\begin{remark}\label{R.S31}
Proposition \ref{P.GreeneFS} appears with $h_k$ replaced by $g_k$ as in (\ref{Eq.GDef}) and (\ref{Eq.GDef2}) in \cite[Section 8.2]{DNV22} for the special case when 
$$s_1 \dots s_{m+n} = \underbrace{R, \dots, R}_\text{$m$ times} \underbrace{D, \dots, D}_\text{$n$ times},$$ 
and for general down-right paths in \cite[Theorem 5]{BCGR22}. In both papers the result is attributed to \cite{K06}, and the distinction between $h_k$ and $g_k$, that is, between NE-chains and up-right paths, appears to be overlooked. Of course, Proposition \ref{P.GreeneFS} holds with $h_k$ replaced by $g_k$ in view of Lemma \ref{L.GequalsH}.
\end{remark}

We end this section with the proof of Lemma \ref{L.HRSK}.

\begin{proof}[Proof of Lemma \ref{L.HRSK}] Assume first that $a_{i,j} \in \mathbb{Z}_{\geq 0}$. Setting $\mu_k = h_k(A) - h_{k-1}(A)$ for $k \geq 1$, we have from Proposition \ref{P.GreeneFS} that $(\mu_k: k \geq 1)$ is a partition, which proves (\ref{Eq.HDec}). If $a_{i,j} \in \mathbb{Q}_{\geq 0}$, then we can clear the denominators of the entries and deduce (\ref{Eq.HDec}) from the integer case. Lastly, we can deduce (\ref{Eq.HDec}) when $a_{i,j} \in \mathbb{R}_{\geq 0}$ by taking limits over $\mathbb{Q}_{\geq 0}$.
\end{proof}

%
%
\subsection{Symmetric setup}\label{Section3.2} In this section we restrict the map $\rsk_{\gamma}$ from Section \ref{Section3.1} to symmetric fillings and deduce half-space analogues of Propositions \ref{P.RSKFS} and \ref{P.GreeneFS}. In the sequel we continue with the same notation as in Section \ref{Section3.1}.\\

We say that a down-right path $\gamma$ is {\em symmetric} if $(i,j) \in \gamma$ implies $(j,i) \in \gamma$. Note that any symmetric down-right path $\gamma$ contains an odd number of vertices $2r+1$, and that if $s_1s_2 \cdots s_{2r}$ is the word corresponding to $\gamma$ we have $s_i = D$ if and only if $s_{2r-i +1} = R$ for $i = 1, \dots, 2r$. In addition, note that $(i,j) \in Y(\gamma)$ if and only if $(j,i) \in Y(\gamma)$. We say that a filling $f$ of $Y(\gamma)$ is {\em symmetric} if $f(i,j) = f(j,i)$ for all $(i,j) \in Y(\gamma)$. 

Suppose that $m,n \geq 0$, and we are given a symmetric down-right path $\gamma = (v_{0}, \dots, v_{2m + 2n})$, where $v_{m+n} = (n,n)$ and $v_{2m + 2n} = (m+ n, 0)$. By a straightforward inductive argument on $|Y(\gamma)|$ one readily checks that applying successively Algorithm F$^1$ to a symmetric filling of $Y(\gamma)$ results in a sequence $\rsk_{\gamma}(f) = (\lambda^{v_0}, \dots, \lambda^{v_{2m+2n}})$ such that $\lambda^{v_{i}} = \lambda^{v_{2m + 2n - i}}$ for $i = 0, \dots, 2m + 2n$. Of course, by Proposition \ref{P.RSKFS} this sequence further satisfies 
$$\lambda^{v_i} \succeq \lambda^{v_{i-1}} \mbox{ if $s_i = R$, whereas $\lambda^{v_i} \preceq \lambda^{v_{i-1}}$ if $s_i = D$.}$$ 
Conversely, if we start from a sequence $\lambda^{v_0}, \dots, \lambda^{v_{2m+2n}}$ that satisfies the above two conditions, and apply successively Algorithm B$^1$, we directly verify that we obtain a symmetric filling of $Y(\gamma)$. Combining the above observations, and setting $\rskhs_{\gamma} = (\lambda^{v_{m+n}}, \dots, \lambda^{v_{2m+2n}})$, we obtain the following corollaries to Propositions \ref{P.RSKFS} and \ref{P.GreeneFS}.

\begin{corollary}\label{C.RSKHS} Assume the same notation as in this section. The map $\rskhs_{\gamma}$ defines a bijection between symmetric fillings of $Y(\gamma)$ and sequences of partitions $(\lambda^{0}, \dots, \lambda^{m+n} = \emptyset)$, such that $\lambda^{i} \succeq \lambda^{i-1}$ if $s_{m+n+i} = R$, whereas $\lambda^{i} \preceq \lambda^{i-1}$ if $s_{m+n+i} = D$.  
\end{corollary}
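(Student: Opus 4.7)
The plan is to read this off directly from Proposition \ref{P.RSKFS} by restricting the bijection to its symmetric subcategory. The crucial input, already flagged in the paragraph preceding the statement, is the equivariance of RSK under the involution $(i,j) \mapsto (j,i)$: symmetric fillings $f$ correspond under $\rsk_\gamma$ to palindromic sequences satisfying $\lambda^{v_i} = \lambda^{v_{2m+2n-i}}$. Once this equivariance is in place, the corollary follows by a short reorganization.

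I would prove equivariance by induction on $|Y(\gamma)|$, growing symmetric Ferrers shapes by adding either one diagonal cell $(k,k)$ or one reflected pair $(i,j), (j,i)$ with $i \neq j$ at a time. The inductive step reduces to the manifest symmetry of Algorithm F$^1$ in its inputs $\mu$ and $\nu$: if $\rho = \lambda^{(u-1,v-1)}$ is symmetric by induction and $\mu, \nu$ are swapped by reflection while $f(i,j) = f(j,i)$, then the outputs of Algorithm F$^1$ at $(i,j)$ and $(j,i)$ coincide. The diagonal case $i = j$ is handled by observing that with $\mu = \nu$, Algorithm F$^1$ reduces to $\lambda_i = \mu_i + \carry$ with $\carry \to \mu_i - \rho_i$, producing the same partition regardless of the reflection. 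The converse—that Algorithm B$^1$ applied to a palindromic sequence gives a symmetric filling—follows by the same symmetry.

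With equivariance in hand, the forward direction is immediate: applying Proposition \ref{P.RSKFS} to a symmetric filling $f$ produces a palindromic sequence, so its second half $\rskhs_\gamma(f) = (\lambda^{v_{m+n}}, \ldots, \lambda^{v_{2m+2n}})$ ends at $\emptyset$ and inherits the claimed interlacing conditions directly. Injectivity is automatic since the palindrome identity recovers the full sequence from its second half, after which Proposition \ref{P.RSKFS} recovers $f$. For surjectivity, given any valid sequence $(\lambda^0, \ldots, \lambda^{m+n} = \emptyset)$, I would form the palindromic extension $\tilde\lambda^{v_i} := \lambda^{|i - (m+n)|}$, verify it satisfies the full interlacing conditions along $\gamma$ via the symmetry identity $s_i = R \iff s_{2m+2n-i+1} = D$, apply Proposition \ref{P.RSKFS} to obtain a filling, and invoke equivariance once more to conclude it is symmetric. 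The main obstacle—despite the "straightforward" label—is the careful bookkeeping in the equivariance induction, especially verifying that adding a reflected pair $(i,j), (j,i)$ in either order produces the same intermediate partitions, so that the induction is independent of the chosen growth sequence.
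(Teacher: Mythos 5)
Your approach is essentially the same as the paper's: establish that $\rsk_\gamma$ carries symmetric fillings to palindromic sequences by an induction on $|Y(\gamma)|$ using the manifest $\mu \leftrightarrow \nu$ symmetry of Algorithm F$^1$, handle the converse via Algorithm B$^1$ (or equivalently, via equivariance plus Proposition \ref{P.RSKFS}), and then read off the corollary by restricting to the second half of the sequence. The paper simply labels this induction ``straightforward'' and omits the details you spell out; your worry about order-independence when adding a reflected pair is already resolved by the paper's earlier remark that the output of the $\rsk_\gamma$ construction is independent of the chosen growth sequence.
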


\begin{corollary}\label{P.GreeneHS} Assume the same notation as in this section. Then, (\ref{Eq.GrFS}) defines a bijection between symmetric fillings of $Y(\gamma)$ and sequences of partitions $(\emptyset = \mu^{v_0}, \dots, \mu^{v_{2m+2n}} = \emptyset)$, such that (1) $\mu^{v_{i}} = \mu^{v_{2m+2n - i}}$ for $i = 0, \dots, 2m + 2n$ and (2) $\mu^{v_i} \succeq \mu^{v_{i-1}}$ if $s_i = R$, whereas $\mu^{v_i} \preceq \mu^{v_{i-1}}$ if $s_i = D$.
\end{corollary}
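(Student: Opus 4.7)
The plan is to obtain Corollary \ref{P.GreeneHS} by restricting the bijection from Proposition \ref{P.GreeneFS} to the symmetric setting and invoking the two observations recorded in the preamble to the corollaries about Algorithms F$^1$ and B$^1$. Because Proposition \ref{P.GreeneFS} already shows that (\ref{Eq.GrFS}) is a bijection between arbitrary fillings of $Y(\gamma)$ and sequences $(\emptyset = \mu^{v_0}, \dots, \mu^{v_{2m+2n}} = \emptyset)$ satisfying condition (2), all that is left is to check that this bijection restricts to one between symmetric fillings of $Y(\gamma)$ and sequences additionally satisfying the palindromic condition (1).

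For the forward implication, I would start with a symmetric filling $f$. Since $\gamma$ is symmetric, reflection across the diagonal is an orientation-reversing automorphism of $\gamma$, from which one reads off $v_{2m+2n-i} = (v,u)$ whenever $v_i = (u,v)$. Combined with $f(i,j) = f(j,i)$, this says that the submatrices $f[u,v]$ and $f[v,u]$ are transposes of one another, and because NE-chains in a matrix and its transpose are in trivial bijection we have $h_k(f[u,v]) = h_k(f[v,u])$ for every $k \geq 1$ --- exactly the observation recorded just after (\ref{Eq.SymmG}). Definition (\ref{Eq.GrFS}) then yields $\mu^{v_i} = \mu^{v_{2m+2n-i}}$, which is condition (1).

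For the converse, I would assume $(\emptyset = \mu^{v_0}, \dots, \mu^{v_{2m+2n}} = \emptyset)$ satisfies both (1) and (2). By the bijectivity part of Proposition \ref{P.GreeneFS}, there is a unique filling $f$ of $Y(\gamma)$ producing this sequence under (\ref{Eq.GrFS}), and it remains to show $f$ is symmetric. Here I would appeal to the second preamble observation: applying Algorithm B$^1$ successively to a sequence satisfying (1) and (2) outputs a symmetric filling $\tilde f$ of $Y(\gamma)$. Since B$^1$ is the inverse of F$^1$ and Proposition \ref{P.GreeneFS} identifies the partitions obtained by successive application of F$^1$ with those defined by (\ref{Eq.GrFS}), the filling $\tilde f$ also produces $(\mu^{v_i})$ under (\ref{Eq.GrFS}), and the uniqueness in Proposition \ref{P.GreeneFS} forces $f = \tilde f$.

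The argument is essentially bookkeeping and presents no real technical obstacle; the only subtlety worth flagging is the need to carefully distinguish between the partitions $\lambda^{(u,v)}$ produced directly by the growth-diagram algorithm F$^1$ and the partitions $\mu^{(u,v)}$ defined by Greene's identity (\ref{Eq.GrFS}). Proposition \ref{P.GreeneFS} identifies them, after which the corollary reduces to the two preamble observations with no further calculation required.
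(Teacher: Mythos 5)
Your proposal is correct and follows the same overall strategy the paper uses (which is left implicit as a one-line corollary in Section~\ref{Section3.2}): use Proposition~\ref{P.GreeneFS} to identify the Greene map with $\rsk_\gamma$, and then restrict that bijection to the symmetric side using the preamble observations about Algorithms F$^1$ and B$^1$. Your backward direction is exactly the paper's intended argument via Algorithm B$^1$. The one place you deviate is the forward direction: the paper's preamble establishes the palindromic condition (1) by an induction on $|Y(\gamma)|$ through Algorithm F$^1$, whereas you derive it directly from the transpose invariance $h_k(A^T)=h_k(A)$ applied to $f[u,v]$ and $f[v,u]$. Both are valid, and your route is arguably the more natural one when the map is presented via Greene's identity rather than via the growth rule; the F$^1$ induction is still needed for Corollary~\ref{C.RSKHS}, but for Corollary~\ref{P.GreeneHS} the transpose argument gets condition (1) for free. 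One small point worth spelling out if you write this up: the identification $v_{2m+2n-i}=(v,u)$ when $v_i=(u,v)$ deserves a sentence of justification (it follows from the path being symmetric about the diagonal together with $v_{m+n}=(n,n)$), and the boundary vertices with $u=0$ or $v=0$ should be handled by the convention $\lambda^{(u,v)}=\emptyset$ rather than by the transpose argument, which needs $(u,v)\in Y(\gamma)$.
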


%
%
\section{Proofs of Theorems \ref{T1} and \ref{T2}}\label{Section4} In Section \ref{Section4.1} we give the proof of Theorem \ref{T1}, and in Section \ref{Section4.2} we give the one for Theorem \ref{T2}. We continue with the same notation as in Sections \ref{Section2} and \ref{Section3}.

%
%
\subsection{Proof of Theorem \ref{T1}}\label{Section4.1} We proceed to prove (\ref{Eq.FSDistr}) by induction on $|Y(\gamma)|$. For the base case $|Y(\gamma)| = 0$, we have that the word corresponding to $\gamma$ is 
\begin{equation}\label{Eq.Extreme}
s_1 \dots s_{M+N} = \underbrace{D, \dots, D}_\text{$N$ times}\underbrace{R, \dots, R}_\text{$M$ times} .
\end{equation}
By Proposition \ref{P.GreeneFS} we have $\mathbb{P}(\lambda(v_i) = \emptyset \mbox{ for } i = 0, \dots, M+N) = 1$, which implies (\ref{Eq.FSDistr}). 

We now fix $k \geq 1$, suppose that (\ref{Eq.FSDistr}) holds when $|Y(\gamma)| \leq k-1$, and proceed to prove it when $|Y(\gamma)| = k$. Since $|Y(\gamma)| = k \geq 1$, we know that the word corresponding to $\gamma$ is not as in (\ref{Eq.Extreme}), and so must contain $s_{i_0}s_{i_0+1} = RD$, for some $i_0 \in \{1, \dots, M+N-1\}$. Let $\gamma'$ be the down-right path whose word is the same as $\gamma$, but with $s_{i_0}s_{i_0+1} = DR$ instead, and note that $Y(\gamma) = Y(\gamma') \cup \{(a_0,b_0)\}$ for some $(a_0,b_0)$, see the right side of Figure \ref{Fig.DownRightPath}. In addition, if $\gamma' = (v_0', \dots, v_{M+N}')$, we have 
\begin{equation}\label{Eq.PT1E1}
\begin{split}
&v_{i_0} = (a_0,b_0),\hspace{2mm} v_{i_0-1} = (a_0-1,b_0),\hspace{2mm} v_{i_0+1} = (a_0, b_0-1), \\
&v_i' = v_i \mbox{ for } i \neq i_0, \hspace{2mm} v_{i_0}' = (a_0-1, b_0-1).
\end{split}
\end{equation}

We now have by countable additivity and throwing away terms of zero probability
\begin{equation}\label{Eq.PT1E2}
    \begin{split}
        &\mathbb{P}(\lambda(v_i) = \lambda^i \mbox{ for } i = 0, \dots, M+N) = \sum_{m \geq 0} \sum_{\rho} {\bf 1}\{\rho \preceq \lambda^{i_0-1}, \rho \preceq \lambda^{i_0+1} \} \\
        & \times \mathbb{P}(\lambda(v_i) = \lambda^i \mbox{ for } i = 0, \dots, M+N, \lambda(a_0-1,b_0-1) = \rho, w_{a_0,b_0} = m).
    \end{split}
\end{equation}
If F$^1(\rho, m)$ is the output of the algorithm from Section \ref{Section3.1} for $\nu = \lambda^{i_0-1}$ and $\mu = \lambda^{i_0+1}$, we have by the induction hypothesis (applied to $\gamma'$) and independence
\begin{equation}\label{Eq.PT1E3}
\begin{split}
&\mathbb{P}(\lambda(v_i) = \lambda^i \mbox{ for } i = 0, \dots, M+N, \lambda(a_0-1,b_0-1) = \rho, w_{a_0,b_0} = m)  \\
& = {\bf 1}\{ \mbox{F$^1(\rho, m) = \lambda^{i_0}$}\} \cdot  {\bf 1}\left\{\lambda^0 = \lambda^{M+N} = \emptyset \right\} \cdot \prod_{(a,b) \in Y(\gamma')} (1-x_ay_b)  \\
& \times(1-x_{a_0}y_{b_0})(x_{a_0} y_{b_0})^m \cdot  \prod_{i = 1, i \not \in \{i_0, i_0+1\} }^{M+N} Q(\lambda^{i-1}, \lambda^{i}) \cdot s_{\lambda^{i_0-1}/\rho}(y_{b_0}) s_{\lambda^{i_0+1}/\rho}(x_{a_0}). 
\end{split}
\end{equation}
We mention that in deriving the last identity we used (\ref{Eq.PT1E2}) and that $w_{a_0,b_0}  \sim \mathrm{Geom}(x_{a_0}y_{b_0})$.

From (\ref{Eq.PT1E2}) and (\ref{Eq.PT1E3}), we see that to prove (\ref{Eq.FSDistr}), it suffices to show that 
\begin{equation}\label{Eq.PT1E4}
\begin{split}
&\sum_{m \geq 0} \sum_{\rho} {\bf 1}\{ \mbox{F$^1(\rho, m) = \lambda^{i_0}$}\}  {\bf 1}\{\rho \preceq \lambda^{i_0-1}, \rho \preceq \lambda^{i_0+1} \} (x_{a_0} y_{b_0})^m s_{\lambda^{i_0-1}/\rho}(y_{b_0}) s_{\lambda^{i_0+1}/\rho}(x_{a_0}) \\
& =  s_{\lambda^{i_0}/\lambda^{i_0-1}}(x_{a_0}) s_{\lambda^{i_0}/\lambda^{i_0+1}}(y_{b_0}). 
\end{split}
\end{equation}
Note that if $\rho \preceq \lambda^{i_0-1}, \rho \preceq \lambda^{i_0+1}$, we have that F$^1(\rho,m) \succeq \lambda^{i_0-1}$ and F$^1(\rho,m) \succeq \lambda^{i_0+1}$. The latter and (\ref{Eq.DefSchur}) show that both sides of (\ref{Eq.PT1E4}) are zero unless $\lambda^{i_0} \succeq \lambda^{i_0 - 1}$ and $\lambda^{i_0} \succeq \lambda^{i_0+1}$. On the other hand, if $\lambda^{i_0} \succeq \lambda^{i_0 - 1}$ and $\lambda^{i_0} \succeq \lambda^{i_0+1}$, then as F$^1$ defines a bijection, we see that only one summand on the top line of (\ref{Eq.PT1E4}) is non-zero. Combining the latter with (\ref{Eq.DefSchur}) and (\ref{Eq.MassPres}), we see that both lines of (\ref{Eq.PT1E4}) are equal to $x_{a_0}^Ay_{b_0}^B$ with 
$$A = \sum_{i \geq 1} (\lambda^{i_0}_i - \lambda^{i_0-1}_i), \mbox{ and }B = \sum_{i \geq 1} (\lambda^{i_0}_i - \lambda^{i_0+1}_i).$$
This completes the proof of (\ref{Eq.PT1E4}), and hence the induction step.

%
%
\subsection{Proof of Theorem \ref{T2}}\label{Section4.2} We proceed to prove (\ref{Eq.HSDistr}) by induction on $|Y_{<}(\gamma)|$. For the base case $|Y_{<}(\gamma)| = 0$, we have $N = 0$ and the word corresponding to $\gamma$ is 
\begin{equation*}
s_1 \dots s_{M+N} = \underbrace{R, \dots, R}_\text{$M$ times} .
\end{equation*}
By Proposition \ref{P.GreeneHS} we have $\mathbb{P}(\lambda(v_i) = \emptyset \mbox{ for } i = 0, \dots, M+N) = 1$, which implies (\ref{Eq.HSDistr}). 

We now fix $k \geq 1$, suppose that (\ref{Eq.HSDistr}) holds when $|Y_{<}(\gamma)| \leq k-1$, and proceed to prove it when $|Y_{<}(\gamma)| = k$. Since $|Y_{<}(\gamma)| = k \geq 1$, we know that $N \geq 1$, and the word corresponding to $\gamma$ contains $N$ letters $D$, and $M$ letters $R$.

Suppose the word corresponding to $\gamma$ is not as in (\ref{Eq.Extreme}), and so must contain $s_{i_0}s_{i_0+1} = RD$, for some $i_0 \in \{1, \dots, M+N-1\}$. Let $\gamma'$ be the down-right path starting from $(N,N)$, whose word is the same as $\gamma$, but with $s_{i_0}s_{i_0+1} = DR$ instead. Note that $Y_{<}(\gamma) = Y_{<}(\gamma') \cup \{(a_0,b_0)\}$ for some $(a_0,b_0)$ with $1 \leq b_0 \leq N < a_0$. In addition, setting $\gamma' = (v_0', \dots, v_{M+N}')$, we have that (\ref{Eq.PT1E1}) and (\ref{Eq.PT1E2}) hold. By the induction hypothesis (applied to $\gamma'$) and independence we have the following analogue of (\ref{Eq.PT1E3})
\begin{equation}\label{Eq.PT2E1}
\begin{split}
&\mathbb{P}(\lambda(v_i) = \lambda^i \mbox{ for } i = 0, \dots, M+N, \lambda(a_0-1,b_0-1) = \rho, w_{a_0,b_0} = m)  \\
& = {\bf 1}\{ \mbox{F$^1(\rho, m) = \lambda^{i_0}$}\} \cdot \tau_{\lambda^0}(c) \cdot {\bf 1}\left\{\lambda^{M+N} = \emptyset \right\} \cdot \prod_{i = 1}^N(1-cx_i)\prod_{(a,b) \in Y_{<}(\gamma')} (1-x_ax_b)  \\
& \times(1-x_{a_0}x_{b_0})(x_{a_0} x_{b_0})^m \cdot  \prod_{i = 1, i \not \in \{i_0, i_0+1\} }^{M+N} Q(\lambda^{i-1}, \lambda^{i}) \cdot s_{\lambda^{i_0-1}/\rho}(x_{b_0}) s_{\lambda^{i_0+1}/\rho}(x_{a_0}). 
\end{split}
\end{equation}
Combining (\ref{Eq.PT1E2}), (\ref{Eq.PT2E1}) and (\ref{Eq.PT1E3}), with $y_{b_0} = x_{b_0}$, we conclude (\ref{Eq.HSDistr}).\\

In the remainder we suppose $\gamma$ is as in (\ref{Eq.Extreme}). Let $\gamma' = (v_0', \dots,v_{M+N}')$ be such that $v_0' = (N-1,N-1)$ and $v_i' = v_i$ for $i \neq 0$. Note that $Y_{<}(\gamma) = Y_{<}(\gamma') \cup \{(N,N)\}$. By countable additivity and throwing away terms of zero probability:
\begin{equation}\label{Eq.PT2E2}
    \begin{split}
        &\mathbb{P}(\lambda(v_i) = \lambda^i \mbox{ for } i = 0, \dots, M+N) = \sum_{m \geq 0} \sum_{\rho} {\bf 1}\{\rho \preceq \lambda^{1} \} \\
        & \times \mathbb{P}(\lambda(v_i) = \lambda^i \mbox{ for } i = 0, \dots, M+N, \lambda(N-1,N-1) = \rho, w_{N,N} = m).
    \end{split}
\end{equation}
If F$^1(\rho, m)$ is the output of the algorithm from Section \ref{Section3.1} for $\nu = \mu = \lambda^1$, we have by the induction hypothesis (applied to $\gamma'$) and independence
\begin{equation}\label{Eq.PT2E3}
\begin{split}
&\mathbb{P}(\lambda(v_i) = \lambda^i \mbox{ for } i = 0, \dots, M+N, \lambda(N-1,N-1) = \rho, w_{N,N} = m)  \\
& = {\bf 1}\{ \mbox{F$^1(\rho, m) = \lambda^{0}$}\} \cdot  {\bf 1}\left\{\lambda^{M+N} = \emptyset \right\} \cdot \prod_{i = 1}^{N-1}(1-cx_i) \prod_{(a,b) \in Y(\gamma')} (1-x_ax_b)  \\
& \times(1-cx_{N})(c x_{N})^m \cdot \tau_{\rho}(c) \cdot s_{\lambda^{1}/\rho}(x_{N}) \cdot  \prod_{i = 2}^{M+N} Q(\lambda^{i-1}, \lambda^{i}). 
\end{split}
\end{equation}
We mention that in deriving the last identity we used $w_{N,N}  \sim \mathrm{Geom}(cx_{N})$.

From (\ref{Eq.PT2E2}) and (\ref{Eq.PT2E3}), we see that to prove (\ref{Eq.HSDistr}), it suffices to show that 
\begin{equation}\label{Eq.PT2E4}
\begin{split}
&\sum_{m \geq 0} \sum_{\rho} {\bf 1}\{ \mbox{F$^1(\rho, m) = \lambda^{0}$}\}  {\bf 1}\{\rho \preceq \lambda^{1} \} (c x_N)^m  \tau_{\rho}(c) s_{\lambda^{1}/\rho}(x_N) =  \tau_{\lambda^0}(c) s_{\lambda^{0}/\lambda^{1}}(x_N). 
\end{split}
\end{equation}
Note that if $\rho \preceq \lambda^{1}$, then F$^1(\rho,m) \succeq \lambda^{1}$. The latter and (\ref{Eq.DefSchur}) show that both sides of (\ref{Eq.PT2E4}) are zero unless $\lambda^{0} \succeq \lambda^{1}$. On the other hand, if $\lambda^{0} \succeq \lambda^{1}$, then as F$^1$ defines a bijection, we see that only one summand on the left side of (\ref{Eq.PT2E3}) is non-zero. Combining the latter with (\ref{Eq.DefSchur}), (\ref{Eq.MassPres}), (\ref{Eq.MassPres2}) and $\tau_{\lambda}(c) = c^{\lambda_1 - \lambda_2 + \lambda_3 - \lambda_4 + \cdots}$, we see that both sides of (\ref{Eq.PT2E4}) are equal to $c^Ax_N^B$ with 
$$A = \sum_{i \geq 1} (\lambda^{0}_{2i-1} - \lambda^{0}_{2i}), \mbox{ and }B = \sum_{i \geq 1} (\lambda^{0}_i - \lambda^{1}_i).$$
This completes the proof of (\ref{Eq.PT2E4}), and hence the induction step.

\begin{appendix}
%
%
\section{Up-right paths and NE-chains}\label{SectionA} The goal of this section is to prove Lemma \ref{L.GequalsH}. In Section \ref{SectionA.1} we introduce some basic definitions and notation about partitions, and in Section \ref{SectionA.2} we prove several auxiliary lemmas that will be needed. The proof of Lemma \ref{L.GequalsH} is given in Section \ref{SectionA.3}.

%
%
\subsection{Basic notation}\label{SectionA.1} A {\em partition} is a sequence $\lambda = (\lambda_1, \lambda_2, \dots)$ of non-negative integers such that $\lambda_1 \geq \lambda_2 \geq \cdots$ and all but finitely many terms are zero. We denote the set of all partitions by $\mathbb{Y}$. The {\em length} of a partition, denoted $\ell(\lambda)$, is the number of non-zero $\lambda_i$, and the {\em weight} of a partition is given by $|\lambda| = \lambda_1 + \lambda_2 + \cdots$. A {\em Young diagram} is a graphical representation of a partition $\lambda$, with $\lambda_1$ left justified boxes on the top row, $\lambda_2$ boxes on the second row and so on. In general, we do not distinguish between a partition $\lambda$ and the Young diagram representing it. We will graphically draw the boxes belonging to a Young diagram in the fourth quadrant $\mathbb{Z}_{>0} \times \mathbb{Z}_{<0}$, so that the boxes corresponding to the $k$-th row of $\lambda$ have coordinates $(1, -k), (2, -k), \dots, (\lambda_k,-k)$, see Figure \ref{Fig.YoungDiagram}. This convention will be convenient for us once we start discussing up-right paths and NE-chains.

For two diagrams $\lambda$ and $\mu$, we can view them as subsets of $\mathbb{Z}_{>0} \times \mathbb{Z}_{<0}$, and write $\mu \subseteq \lambda$ for the usual inclusion of sets -- it is equivalent to the statement $\lambda_k \geq \mu_k$ for all $k \geq 1$. Given a partition $\lambda$, we denote by $\mathring{\lambda}$ its {\em interior}, which is the set of boxes $(i,j) \in \lambda$, such that $(i+1, j-1) \in \lambda$. We denote by $\partial \lambda$ its {\em boundary}, which is the set of boxes $(i,j) \in \lambda$, such that $(i+1,j-1) \not \in \lambda$. Note that $\mathring{\lambda}$ is itself a Young diagram and $\mathring{\lambda} = \lambda \setminus \partial \lambda$. Given two partitions $\lambda, \mu$, we let $\min(\lambda, \mu)$ denote the Young diagram $\nu$, with $\nu_i = \min(\lambda_i, \mu_i)$. Similarly, we let $\max(\lambda, \mu)$ denote the Young diagram $\rho$, with $\rho_i = \max(\lambda_i, \mu_i)$. Notice that $\min(\lambda, \mu) = \lambda \cap \mu$ is the largest Young diagram that is contained in both $\lambda$ and $\mu$, in the sense that if $\tilde{\nu} \subseteq \lambda$ and $\tilde{\nu} \subseteq \mu$, then $\tilde{\nu} \subseteq \min(\lambda, \mu)$. Similarly, $\max(\lambda, \mu) = \lambda \cup \mu$ is the smallest diagram that contains both $\lambda$ and $\mu$. Given a finite subset $S \subset \mathbb{Z}_{>0} \times \mathbb{Z}_{<0}$, we denote by $\lambda(S) = \cap_{\lambda \in \mathbb{Y}: S \subseteq \lambda} \lambda$, which is the smallest Young diagram containing $S$. Some of these definitions are illustrated in Figure \ref{Fig.YoungDiagram}. 

\begin{figure}[ht]
\centering
\begin{tikzpicture}[scale=0.6, every node/.style={scale=0.9}]

\definecolor{intColor}{gray}{0.65}


\foreach \i/\row in {1/5, 2/3, 3/2, 4/1} {
  \foreach \j in {1,...,\row} {
    \draw (\j+13,-\i) rectangle ++(1,-1);
  }
}

\foreach \i/\row in {1/3, 2/3, 3/1} {
  \foreach \j in {1,...,\row} {
    \draw (\j+21,-\i) rectangle ++(1,-1);
  }
}

\node at (3, -6) {\(\lambda = (5,3,1)\)};
\node at (9.5, -6) {\(\mu = (3,3,2,1)\)};
\node at (16.5, -6) {\(\max(\lambda, \mu) = (5,3,2,1)\)};
\node at (24, -6) {\(\min(\lambda, \mu) = (3,3,1)\)};

\foreach \x/\y in {1/1, 2/1} {
  \fill[intColor] (\x,-\y) rectangle ++(1,-1);
  \draw (\x,-\y) rectangle ++(1,-1);
}

\foreach \x/\y in {1/1, 2/1, 1/2} {
  \fill[intColor] (7 + \x,-\y) rectangle ++(1,-1);
  \draw (7 + \x,-\y) rectangle ++(1,-1);
}

\foreach \x/\y in {5/1, 4/1, 3/1, 3/2, 2/2, 1/2, 1/3} {
  \draw[very thick, dashed] (\x,-\y) rectangle ++(1,-1);
}

\foreach \x/\y in {3/1, 3/2, 2/2, 2/3, 1/3, 1/4} {
  \draw[very thick, dashed] (7+\x,-\y) rectangle ++(1,-1);
}

\foreach \j in {1,...,5} {
  \node at (\j+0.5, -0.5) {\(\j\)};
}
\foreach \i in {1,...,3} {
  \node at (0.2, -\i - 0.5) {\( -\i\)};
}

\foreach \j in {1,...,3} {
  \node at (\j+0.5 + 7, -0.5) {\(\j\)};
}
\foreach \i in {1,...,4} {
  \node at (0.2 + 7, -\i - 0.5) {\( -\i\)};
}

\foreach \j in {1,...,5} {
  \node at (\j+0.5 + 13, -0.5) {\(\j\)};
}
\foreach \i in {1,...,4} {
  \node at (0.2 + 13, -\i - 0.5) {\( -\i\)};
}

\foreach \j in {1,...,3} {
  \node at (\j+0.5 + 21, -0.5) {\(\j\)};
}
\foreach \i in {1,...,3} {
  \node at (0.2 + 21, -\i - 0.5) {\( -\i\)};
}

\end{tikzpicture}
\caption{Young diagrams for $\lambda = (5,3,1)$, $\mu = (3,3,2,1)$, and their maximum $\max(\lambda, \mu)$ and minimum $\min(\lambda, \mu)$. The interiors $\mathring{\lambda}$ and $\mathring{\mu}$ are in gray; the boundaries $\partial \lambda$ and $\partial \mu$ are outlined with dashed boxes.}\label{Fig.YoungDiagram}
\end{figure}
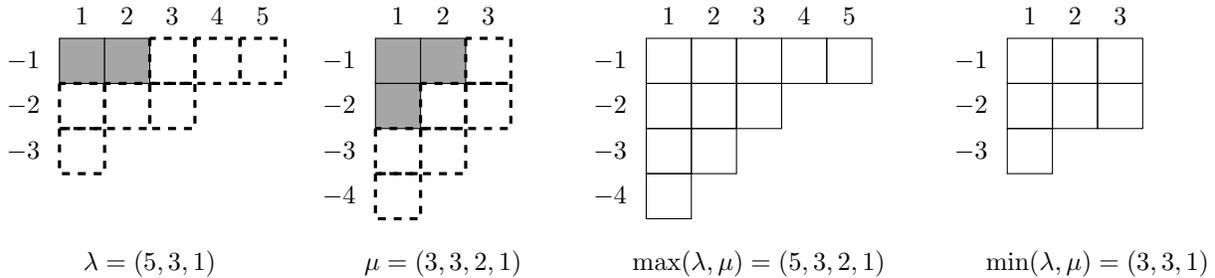

Finally, recall from Section \ref{Section2} that a {\em north-east(NE)-chain} $\chi$ is a (possibly empty) sequence $\chi = ((a_1,b_1), \dots, (a_r,b_r))$ with $a_i, b_i \in \mathbb{Z}$, $a_{i-1} \leq a_i$, $b_{i-1} \leq b_i$ and $(a_{i-1}, b_{i-1}) \neq (a_i, b_i)$. We also recall that an {\em up-right path} $\pi$ in $\mathbb{Z}^2$ is a (possibly empty) sequence of vertices $\pi = (v_1, \dots, v_r)$ with $v_i \in \mathbb{Z}^2$ and $v_i - v_{i-1} \in \{(0,1), (1,0)\}$. In Section \ref{SectionA.2} we exclusively work with NE-chains and up-right paths whose vertices lie in $\mathbb{Z}_{>0} \times \mathbb{Z}_{<0}$, and do not mention this further.

%
%
\subsection{Auxiliary lemmas}\label{SectionA.2} We continue with the same notation as in Section \ref{SectionA.1}. Our goal is to prove two lemmas, see Lemmas \ref{Lem.Layers} and \ref{Lem.OffDiag}, that will be useful in the proof of Lemma \ref{L.GequalsH}. In order to establish those, we start by proving a few simple lemmas.

\begin{lemma}\label{Lem.ChainInBoundary} Let $\chi$ be a NE-chain and $\lambda(\chi)$ the smallest diagram containing it. Then $\chi \subseteq \partial \lambda(\chi)$. 
\end{lemma}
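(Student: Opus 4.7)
The plan is to give an explicit set-theoretic description of $\lambda(\chi)$ and then derive a contradiction from the assumption that some vertex of $\chi$ lies in the interior $\mathring{\lambda(\chi)}$. First, I would observe that a subset $T \subseteq \mathbb{Z}_{>0} \times \mathbb{Z}_{<0}$ is a Young diagram in the sense of Section \ref{SectionA.1} if and only if $(a,b) \in T$ implies $(a',b') \in T$ for all $1 \leq a' \leq a$ and $b \leq b' \leq -1$. Applying this to each singleton $\{(a_i,b_i)\}$ for $(a_i,b_i) \in \chi$ and taking the union, one gets
\[
\lambda(\chi) \;=\; \bigcup_{i=1}^{r} \bigl\{\,(a,b) \in \mathbb{Z}_{>0}\times \mathbb{Z}_{<0} : 1 \leq a \leq a_i,\ b_i \leq b \leq -1\,\bigr\}.
\]
Indeed, the right-hand side is a Young diagram containing $\chi$, and any Young diagram containing $\chi$ must contain it; so it coincides with the intersection defining $\lambda(\chi)$.

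Now fix a vertex $(a_i,b_i) \in \chi$. To prove $(a_i,b_i) \in \partial \lambda(\chi)$, it suffices by definition to check $(a_i+1, b_i-1) \notin \lambda(\chi)$. Suppose, toward a contradiction, that $(a_i+1,b_i-1) \in \lambda(\chi)$. By the explicit formula above, there exists an index $j \in \{1,\dots,r\}$ with $a_j \geq a_i + 1$ and $b_j \leq b_i - 1$, i.e.
\[
a_j > a_i \qquad \text{and} \qquad b_j < b_i.
\]
The defining monotonicity of the NE-chain $\chi$ gives $a_1 \leq a_2 \leq \cdots \leq a_r$ and $b_1 \leq b_2 \leq \cdots \leq b_r$. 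From $a_j > a_i$ we must have $j > i$, but then also $b_j \geq b_i$, contradicting $b_j < b_i$. Hence $(a_i+1,b_i-1) \notin \lambda(\chi)$, and therefore $(a_i, b_i) \in \partial \lambda(\chi)$ for every $i$, which is the desired inclusion $\chi \subseteq \partial \lambda(\chi)$.

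The argument is essentially a one-line comparison of coordinates, so I do not anticipate any real obstacle; the only point worth being careful about is the explicit description of $\lambda(\chi)$ as a union of rectangular hooks, since the lemma is stated in terms of the intersection $\bigcap_{\lambda \supseteq \chi} \lambda$ and one must verify that this intersection is exactly the claimed union. Once that is in place, the NE-monotonicity immediately rules out the existence of a vertex that could witness any $(a_i,b_i)$ being interior.
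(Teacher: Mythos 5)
Your proof is correct. The key observation in both your argument and the paper's is the same: the NE-chain monotonicity $a_1 \leq \cdots \leq a_r$, $b_1 \leq \cdots \leq b_r$ means no vertex of $\chi$ lies strictly southeast of another, so $(a_i+1, b_i-1)$ cannot lie under any $(a_j, b_j)$. The packaging differs slightly: you first derive the explicit formula $\lambda(\chi) = \bigcup_i \{(a,b) : 1 \leq a \leq a_i,\ b_i \leq b \leq -1\}$ and then read off the boundary condition from it, whereas the paper fixes one $(a,b) \in \chi$, notes $\chi$ avoids the strict-southeast quadrant $T = \{(i,j) : i > a,\ j < b\}$, and invokes minimality to conclude $\lambda(\chi) \cap T = \emptyset$ without ever writing the formula. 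Both routes are equally short; your explicit description is a nice byproduct but is logically equivalent to the paper's per-point minimality argument.
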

\begin{proof} Fix $(a,b) \in \chi$ and note $(a,b) \in \lambda(\chi)$ by definition. For each $(i,j) \in \chi$, we have $i \leq a$ and $j \leq b$, or $i \geq a$ and $j \geq b$. Consequently, $\chi \subset \mathbb{Z}_{>0} \times \mathbb{Z}_{<0} \setminus T$, where $T = \{(i,j): i > a \mbox{ and } j < b\}$. If $\mu = \lambda(\chi) \cap (\mathbb{Z}_{>0} \times \mathbb{Z}_{<0} \setminus T)$, then we see that $\mu$ is a diagram that contains $\chi$, and so $\mu = \lambda(\chi)$ by minimality. This shows $(a+1,b-1) \not \in \lambda(\chi)$, and so $(a,b) \in \partial \lambda(\chi)$ as desired.
\end{proof}

\begin{lemma}\label{Lem.UnionBoundaries} Let $\lambda, \mu \in \mathbb{Y}$, and set $\nu = \min(\lambda, \mu)$, $\rho = \max(\lambda, \mu)$. Then $\partial \lambda \cup \partial \mu = \partial \nu \cup \partial \rho$.  
\end{lemma}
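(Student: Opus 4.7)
The plan is to prove the set equality by working pointwise, using the characterization $\partial \sigma = \{(i,j) \in \sigma : (i+1,j-1) \notin \sigma\}$ together with the set-theoretic identities $\nu = \lambda \cap \mu$ and $\rho = \lambda \cup \mu$ noted in Section \ref{SectionA.1}. Both inclusions then reduce to straightforward case analyses.

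For the forward inclusion $\partial \lambda \cup \partial \mu \subseteq \partial \nu \cup \partial \rho$, I would fix a box $(i,j) \in \partial \lambda$ (the case $(i,j) \in \partial \mu$ is symmetric) and split on whether $(i,j)$ also lies in $\mu$. If it does, then $(i,j) \in \nu$, and because $(i+1,j-1) \notin \lambda$ a fortiori $(i+1,j-1) \notin \nu = \lambda \cap \mu$, placing $(i,j)$ in $\partial \nu$. If $(i,j) \notin \mu$, I would invoke the one genuinely non-set-theoretic step: the monotonicity of the row lengths of the partition $\mu$ forces $(i+1,j-1) \notin \mu$ as well, since $\mu_{-j+1} \leq \mu_{-j} < i < i+1$. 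Combined with $(i+1,j-1) \notin \lambda$, this gives $(i+1,j-1) \notin \rho$, hence $(i,j) \in \partial \rho$ (noting $(i,j) \in \lambda \subseteq \rho$).

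For the reverse inclusion $\partial \nu \cup \partial \rho \subseteq \partial \lambda \cup \partial \mu$, the argument is purely set-theoretic. If $(i,j) \in \partial \rho$, then $(i,j) \in \lambda \cup \mu$; without loss of generality $(i,j) \in \lambda$, and since $(i+1,j-1) \notin \rho$ we have $(i+1,j-1) \notin \lambda$, so $(i,j) \in \partial \lambda$. If instead $(i,j) \in \partial \nu$, then $(i,j) \in \lambda \cap \mu$ while $(i+1,j-1) \notin \lambda \cap \mu$, so at least one of $\lambda, \mu$ omits $(i+1,j-1)$; whichever one does contains $(i,j)$ in its boundary.

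The only mild obstacle is the monotonicity step in the forward direction, which is the single point at which we use that $\mu$ is an honest partition rather than an arbitrary subset of $\mathbb{Z}_{>0} \times \mathbb{Z}_{<0}$; everything else follows from unwinding definitions. The whole proof therefore fits in a few lines once these cases are laid out.
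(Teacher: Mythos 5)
Your proof is correct and follows essentially the same route as the paper: forward inclusion by a case split on whether the box also lies in the other partition (with the implication $(i,j)\notin\mu \Rightarrow (i+1,j-1)\notin\mu$ justified by row monotonicity, which the paper uses tacitly), and reverse inclusion handled exactly as you do, with your $\partial\nu$ case being a slight rephrasing of the paper's argument via interiors. No gaps.
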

\begin{proof} Let $(a,b) \in \partial \lambda$. If $(a,b) \in \mu$, then $(a,b) \in \nu$. As $(a+1,b-1) \not \in \lambda$, we have $(a+1,b-1) \not \in \nu$ and so $(a,b) \in \partial \nu \subseteq \partial \nu \cup \partial \rho$. Conversely, if $(a,b) \not \in \mu$, then $(a+1,b-1) \not \in \mu$. As $(a,b) \in \partial \lambda$, we also have $(a+1,b-1) \not \in \lambda$, and so $(a+1, b-1) \not \in \rho$. As $(a,b) \in \lambda \subseteq \rho$, we conclude $(a,b) \in \partial \rho \subseteq \partial \nu \cup \partial \rho$. As $(a,b) \in \partial \lambda$ was arbitrary, we conclude $\partial \lambda \subseteq \partial \nu \cup \partial \rho$. Swapping the roles of $\lambda$ and $\mu$ shows $\partial \mu \subseteq \partial \nu \cup \partial \rho$, and so 
\begin{equation}\label{Eq.Cont1}
\partial \lambda \cup \partial \mu \subseteq \partial \nu \cup \partial \rho.  
\end{equation}

Suppose now that $(a,b) \in \partial \rho$. Then $(a,b) \in \lambda$, or $(a,b) \in \mu$, or both. Without loss of generality, suppose that $(a,b) \in \lambda$, and note that $(a+1,b-1) \not \in \rho$ implies $(a+1,b-1) \not \in \lambda$, so that $(a,b) \in \partial \lambda$. This shows $(a,b) \in \partial \lambda \subseteq \partial \lambda \cup \partial \mu$. As $(a,b) \in \partial \rho$ was arbitrary, we conclude $\partial \rho \subseteq \partial \lambda \cup \partial \mu$. 

Finally, suppose that $(a,b) \in \partial \nu$, and hence $(a,b) \in \lambda \cap \mu$. If $(a,b) \in \mathring{\lambda} \cap \mathring{\mu}$, then $(a+1,b-1) \in \lambda \cap \mu = \nu$, contradicting $(a,b) \in \partial \nu$. Consequently, $(a,b) \not \in \mathring{\lambda}$ (in which case $(a,b) \in \partial \lambda$), or $(a,b) \not \in \mathring{\mu}$ (in which case $(a,b) \in \partial \mu$), or both. Either way, we conclude $(a,b) \in \partial \lambda \cup \partial \mu$. As $(a,b) \in \partial \nu$ was arbitrary, we conclude $\partial \nu \subseteq \partial \lambda \cup \partial \mu$. The last two paragraphs now give
\begin{equation}\label{Eq.Cont2}
\partial \nu \cup \partial \rho \subseteq \partial \lambda \cup \partial \mu.  
\end{equation}
Equations (\ref{Eq.Cont1}) and (\ref{Eq.Cont2}) give the statement of the lemma.
\end{proof}

\begin{lemma}\label{Lem.Twist} Let $\chi_1, \chi_2$ be two disjoint NE-chains and set $\lambda = \max(\lambda(\chi_1), \lambda(\chi_2))$. We can find two disjoint NE-chains $\chi'_1, \chi'_2$, such that: 
$$\mbox{ {\em (1) $\chi_1', \chi_2' \subseteq \lambda$; (2) $\chi_1' \sqcup \chi_2' = \chi_1 \sqcup \chi_2$; (3) $\chi_1' \subseteq \partial \lambda$; (4) $\chi_2' \subseteq \mathring{\lambda}$; (5) $\lambda(\chi'_1) = \lambda$. }}$$
\end{lemma}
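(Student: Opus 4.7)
The plan is to construct $\chi_1'$ and $\chi_2'$ by splitting the set $\chi_1\sqcup\chi_2$ according to membership in $\partial\lambda$:
\[
\chi_1' := (\chi_1\sqcup\chi_2)\cap\partial\lambda, \qquad \chi_2' := (\chi_1\sqcup\chi_2)\setminus\partial\lambda.
\]
Set $\mu := \min(\lambda(\chi_1),\lambda(\chi_2))$. Property (2) holds by construction, and (1) follows from $\chi_1\sqcup\chi_2\subseteq\lambda(\chi_1)\cup\lambda(\chi_2)=\lambda$. By Lemma \ref{Lem.ChainInBoundary}, $\chi_i\subseteq\partial\lambda(\chi_i)$, and then Lemma \ref{Lem.UnionBoundaries} applied to $(\lambda(\chi_1),\lambda(\chi_2))$ gives $\chi_1\sqcup\chi_2\subseteq\partial\mu\cup\partial\lambda$. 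Hence $\chi_2'\subseteq\partial\mu\setminus\partial\lambda\subseteq\mu\setminus\partial\lambda\subseteq\lambda\setminus\partial\lambda=\mathring\lambda$, which is (4); and (3) is tautological.

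To see that $\chi_1'$ and $\chi_2'$ are NE-chains I would use the following general observation: the boundary $\partial\nu$ of any Young diagram $\nu$ is (the support of) a NE-chain. Indeed, if $(a_1,b_1),(a_2,b_2)\in\partial\nu$ had $a_1<a_2$ and $b_1>b_2$, then $(a_2,b_2)\in\nu$ combined with the left-justification and weak decrease of rows of $\nu$ would force $(a_1+1,b_1-1)\in\nu$, contradicting $(a_1,b_1)\in\partial\nu$. Thus any subset of $\partial\nu$ is totally ordered under the NE partial order and so forms a NE-chain. Applying this to $\nu=\lambda$ handles $\chi_1'$ and to $\nu=\mu$ handles $\chi_2'$; disjointness of the two chains is immediate.

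The main obstacle is property (5), $\lambda(\chi_1')=\lambda$; the inclusion ``$\subseteq$'' is clear from (1). For ``$\supseteq$'' I would first use that $\lambda(S_1\cup S_2)=\lambda(S_1)\cup\lambda(S_2)$ for any finite $S_1,S_2\subseteq\mathbb{Z}_{>0}\times\mathbb{Z}_{<0}$ (the right side is a Young diagram containing $S_1\cup S_2$, and any Young diagram containing $S_1\cup S_2$ must contain both $\lambda(S_i)$), giving $\lambda(\chi_1\sqcup\chi_2)=\max(\lambda(\chi_1),\lambda(\chi_2))=\lambda$. Fix $k$ with $\lambda_k>0$; then by the description of $\lambda(\chi_1\sqcup\chi_2)_k$ there is $(a^*,b^*)\in\chi_1\sqcup\chi_2$ with $-b^*\geq k$ and $a^*=\lambda_k$. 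The inclusion $(a^*,b^*)\in\lambda$ together with monotonicity of $\lambda$ yields $a^*\leq\lambda_{-b^*}\leq\lambda_k=a^*$, hence $\lambda_{-b^*}=a^*$ and $\lambda_{-b^*+1}\leq a^*$, forcing $(a^*+1,b^*-1)\notin\lambda$. Thus $(a^*,b^*)\in\partial\lambda$, so $(a^*,b^*)\in\chi_1'$, and $\lambda(\chi_1')_k\geq a^*=\lambda_k$. Ranging over $k$ gives $\lambda(\chi_1')\supseteq\lambda$, completing (5).

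The only genuinely delicate step is (5): one must ensure that simply discarding the interior points does not shrink the enclosed diagram. The observation above makes this work automatically, since any point in $\chi_1\sqcup\chi_2$ that realizes a row length of $\lambda$ is necessarily on $\partial\lambda$ and therefore survives in $\chi_1'$.
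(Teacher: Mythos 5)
Your proposal is correct and takes essentially the same approach as the paper: the decomposition is the one the paper uses (your $\chi_2' = (\chi_1\sqcup\chi_2)\setminus\partial\lambda$ coincides with the paper's $(\partial\mu\setminus\partial\lambda)\cap(\chi_1\sqcup\chi_2)$ since $\chi_1\sqcup\chi_2\subseteq\partial\lambda\cup\partial\mu$), and the justifications of (1)--(4) are identical, with your proof even supplying the small observation, left implicit in the paper, that $\partial\nu$ is itself a NE-chain. Your verification of (5) is a mild variant --- you exhibit an explicit witness $(a^*,b^*)\in\partial\lambda$ attaining each row length of $\lambda$, whereas the paper bounds $\lambda_i(\chi_2')$ through $\mathring\lambda$ and compares with $\max(\lambda(\chi_1'),\lambda(\chi_2'))$ --- but the underlying observation is the same.
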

\begin{proof} Let $\mu = \min(\lambda(\chi_1), \lambda(\chi_2))$, and define
$$\chi_1' = \partial \lambda \cap (\chi_1 \sqcup \chi_2) \mbox{ and } \chi_2' = (\partial \mu \setminus \partial \lambda) \cap (\chi_1 \sqcup \chi_2).$$
The latter specifies our choice of $\chi_1'$ and $\chi_2'$, and we proceed to verify that they satisfy the conditions in the lemma. Note that $\chi_1'$ and $\chi_2'$ are NE-chains, since they are subsets of the NE-chains $\partial \lambda$ and $\partial \mu$, respectively. As $\chi_1' \subseteq \partial \lambda$, $\chi_2' \subseteq \partial \mu \setminus \partial \lambda$, and $\mu \subseteq \lambda$, we trivially conclude that $\chi_1', \chi_2'$ are disjoint and satisfy conditions (1), (3) and (4). 

We next check condition (2), and by construction we only need to show $(a,b) \in \chi_1' \sqcup \chi_2'$ for all $(a,b) \in \chi_1 \sqcup \chi_2$. Suppose $(a,b) \in \chi_1 \sqcup \chi_2$. From Lemmas \ref{Lem.ChainInBoundary} and \ref{Lem.UnionBoundaries}, we have $(a,b) \in \partial \lambda \cup \partial \mu$. If $(a,b) \in \partial \lambda$, then $(a,b) \in \chi_1'$, while if $(a,b) \not \in \partial \lambda$, then $(a,b) \in \partial \mu \setminus \partial \lambda$, and so $(a,b) \in \chi_2'$. Either way, we conclude $(a,b) \in \chi'_1 \sqcup \chi'_2$.

Lastly, we verify condition (5). Since $\chi_1' \subseteq \lambda$, we have $\lambda(\chi_1') \subseteq \lambda$. To show the converse inclusion, we seek to show that for all $i \geq 1$, we have $\lambda_i \leq \lambda_i(\chi_1')$. The latter is clear if $\lambda_i = 0$, and so we may assume that $\lambda_i > 0$. In particular, we see $\mathring{\lambda}_i \leq \lambda_i - 1$, and so by condition (4) we have $\lambda_i(\chi_2') \leq \lambda_i - 1$. Set $\nu = \max(\lambda(\chi_1'), \lambda(\chi_2'))$. From condition (2), we have $\lambda \subseteq \nu$, and so $\nu_i \geq \lambda_i$. But $\nu_i = \max(\lambda_i(\chi_1'),\lambda_i(\chi_2')) \leq \max(\lambda_i(\chi_1'), \lambda_i - 1)$ and so $\lambda_i \leq \lambda_i(\chi_1')$ as desired.
\end{proof}

The above three lemmas will be used to establish the following two results, which are the ones we require in Section \ref{SectionA.3} below. The first of these results says that any $k$ pairwise disjoint NE-chains can be covered by $k$ pairwise disjoint nested diagram boundaries, see Figure \ref{Fig.Induction}. Its proof relies on repeated uses of Lemma \ref{Lem.Twist}.

\begin{figure}[ht]
\centering
\begin{tikzpicture}[scale=0.65]

\definecolor{Bg}{gray}{1.0}        
\definecolor{C1}{gray}{0.8} 
\definecolor{C2}{gray}{0.5} 
\definecolor{C3}{gray}{0.2} 

\begin{scope}[shift={(0,0)}]


  \node at (3, -0.5 ) {\(m = 0\)};

  \fill[C1] (4,-3) rectangle ++(1,-1);
  \fill[C1] (1,-3) rectangle ++(1,-1);
  \fill[C1] (5,-1) rectangle ++(1,-1);

  \fill[C2] (1,-2) rectangle ++(1,-1);
  \fill[C2] (4,-1) rectangle ++(1,-1);
  \fill[C2] (3,-2) rectangle ++(1,-1);
  \fill[C2] (4,-2) rectangle ++(1,-1);
  
  \fill[C3] (2,-4) rectangle ++(1,-1);
  \fill[C3] (2,-1) rectangle ++(1,-1);
  \fill[C3] (2,-2) rectangle ++(1,-1);

  \foreach \row [count=\i] in {5,4,4,3,2} {
    \foreach \j in {1,...,\row} {
      \draw (\j,-\i) rectangle ++(1,-1);
    }
  }

  \draw[dashed, line width=2pt]
    (1,-5) -- (3, -5) -- (3,-1) -- (1,-1) -- (1,-5);
\end{scope}

\begin{scope}[shift={(7.5,0)}]

  \node at (3, -0.5) {\(m = 1\)};

  \fill[C1] (1,-3) rectangle ++(1,-1);
  \fill[C1] (2,-1) rectangle ++(1,-1);
  \fill[C1] (2,-2) rectangle ++(1,-1);

   \fill[C2] (1,-2) rectangle ++(1,-1);
  \fill[C2] (4,-1) rectangle ++(1,-1);
  \fill[C2] (3,-2) rectangle ++(1,-1);
  \fill[C2] (4,-2) rectangle ++(1,-1);
  
  \fill[C3] (2,-4) rectangle ++(1,-1);
  \fill[C3] (4,-3) rectangle ++(1,-1);
  \fill[C3] (5,-1) rectangle ++(1,-1);

  \foreach \row [count=\i] in {5,4,4,3,2} {
    \foreach \j in {1,...,\row} {
      \draw (\j,-\i) rectangle ++(1,-1);
    }
  }

  \draw[dashed, line width=2pt]
    (1,-5) -- (3, -5) -- (3, -4) -- (5,-4) -- (5,-2) -- (6,-2) -- (6,-1) -- (1,-1) -- (1,-5);

\end{scope}

\begin{scope}[shift={(15,0)}]

  \node at (3, -0.5) {\(m = 2\)};

  \fill[C1] (1,-3) rectangle ++(1,-1);
  \fill[C1] (2,-1) rectangle ++(1,-1);
  \fill[C1] (2,-2) rectangle ++(1,-1);

  \fill[C2] (1,-2) rectangle ++(1,-1);
  \fill[C2] (3,-2) rectangle ++(1,-1);

  \fill[C3] (2,-4) rectangle ++(1,-1);
  \fill[C3] (4,-3) rectangle ++(1,-1);
  \fill[C3] (5,-1) rectangle ++(1,-1);
  \fill[C3] (4,-2) rectangle ++(1,-1);
    \fill[C3] (4,-1) rectangle ++(1,-1);

  \foreach \row [count=\i] in {5,4,4,3,2} {
    \foreach \j in {1,...,\row} {
      \draw (\j,-\i) rectangle ++(1,-1);
    }
  }

  \draw[dashed, line width=2pt]
    (1,-5) -- (3, -5) -- (3, -4) -- (5,-4) -- (5,-2) -- (6,-2) -- (6,-1) -- (1,-1) -- (1,-5);

\end{scope}

\begin{scope}[shift={(15,-6)}]

  \fill[C1] (2,-1) rectangle ++(1,-1);

  \fill[C2] (1,-2) rectangle ++(1,-1);
  \fill[C2] (3,-2) rectangle ++(1,-1);
    \fill[C2] (1,-3) rectangle ++(1,-1);
  \fill[C2] (2,-2) rectangle ++(1,-1);

  \fill[C3] (2,-4) rectangle ++(1,-1);
  \fill[C3] (4,-3) rectangle ++(1,-1);
  \fill[C3] (5,-1) rectangle ++(1,-1);
  \fill[C3] (4,-2) rectangle ++(1,-1);
    \fill[C3] (4,-1) rectangle ++(1,-1);

  \foreach \row [count=\i] in {5,4,4,3,2} {
    \foreach \j in {1,...,\row} {
      \draw (\j,-\i) rectangle ++(1,-1);
    }
  }

  \draw[ line width=2pt]
    (1,-5) -- (3, -5) -- (3, -4) -- (5,-4) -- (5,-2) -- (6,-2) -- (6,-1) -- (1,-1) -- (1,-5);
  \draw[ line width=2pt]
    (1,-4) -- (2, -4) -- (2, -3) -- (4,-3) -- (4,-1) ;
  \draw[ line width=2pt]
    (1,-2) -- (3, -2) -- (3, -1);

\end{scope}

\begin{scope}[shift={(0,-8)}]


  \fill[C1] (1,-3) rectangle ++(1,-1);
  \fill[C1] (2,-1) rectangle ++(1,-1);
  \fill[C1] (2,-2) rectangle ++(1,-1);

  \fill[C2] (1,-2) rectangle ++(1,-1);
  \fill[C2] (3,-2) rectangle ++(1,-1);

  \foreach \row [count=\i] in {3,3,1} {
    \foreach \j in {1,...,\row} {
      \draw (\j,-\i) rectangle ++(1,-1);
    }
  }

  \draw[->, very thick] (5,-2) -- (6,-2);

    \fill[C1] (8,-1) rectangle ++(1,-1);

  \fill[C2] (7,-2) rectangle ++(1,-1);
  \fill[C2] (9,-2) rectangle ++(1,-1);
  \fill[C2] (7,-3) rectangle ++(1,-1);
  \fill[C2] (8,-2) rectangle ++(1,-1);

  \foreach \row [count=\i] in {3,3,1} {
    \foreach \j in {1,...,\row} {
      \draw (\j + 6,-\i) rectangle ++(1,-1);
    }
  }
  \draw[ line width=2pt]
    (7,-4) -- (8, -4) -- (8, -3) -- (10,-3) -- (10,-1) -- (7,-1) -- (7,-4) ;
  \draw[ line width=2pt]
    (7,-2) -- (9, -2) -- (9, -1);

\end{scope}

\begin{scope}[shift={(1,-7)}]
  \draw[fill=C1] (0,0) rectangle ++(0.5,-0.5);
  \node[anchor=west] at (0.7,-0.25) {$\chi^m_1$};
  \draw[fill=C2] (2.2,0) rectangle ++(0.5,-0.5);
  \node[anchor=west] at (2.9,-0.25) {$\chi^m_2$};
  \draw[fill=C3] (4.4,0) rectangle ++(0.5,-0.5);
  \node[anchor=west] at (5.1,-0.25) {$\chi^m_3$};
\end{scope}

\end{tikzpicture}
\caption{The top-left depicts $\lambda = (5,4,4,3,2)$ with three pairwise disjoint NE-chains $\chi_1,\chi_2, \chi_3$ drawn inside. The top three diagrams show the triplet of NE-chains $(\chi^m_1, \chi^m_2, \chi^m_3)$ in the proof of Lemma \ref{Lem.Layers} for $m = 0,1,2$. The dashed lines indicate $\lambda(\chi^m_3)$ for $ m = 0,1,2$. The bottom-left depicts the result of applying the induction hypothesis to $(\chi^3_1, \chi^3_2)$. The bottom-right depicts the $\lambda^1, \lambda^2, \lambda^3$ from Lemma \ref{Lem.Layers} in bold lines, and we have $\lambda^1 = (2)$, $\lambda^2 = (3,3,1)$, $\lambda^3 = (5,4,4,2)$.}\label{Fig.Induction}
\end{figure}
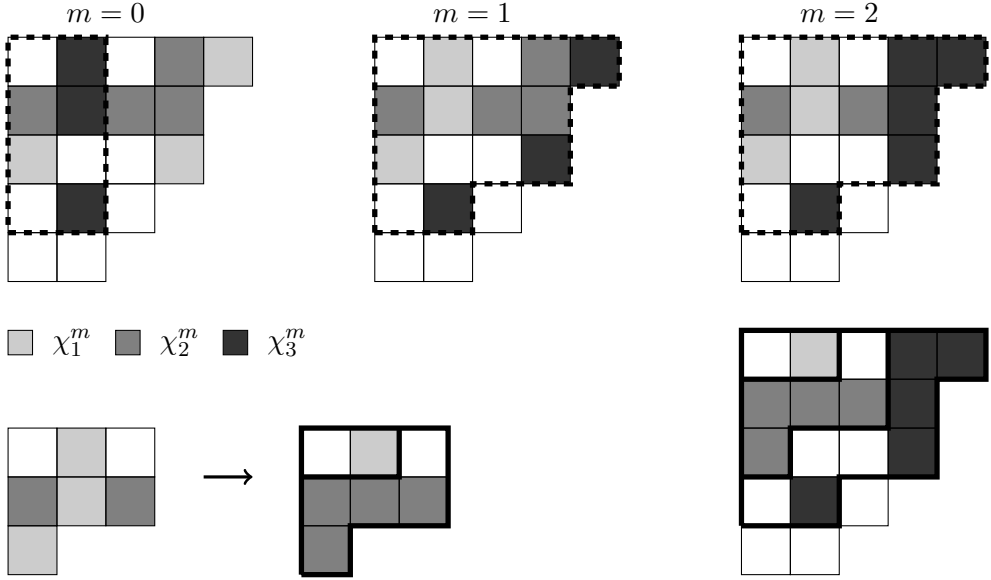

\begin{lemma}\label{Lem.Layers} Fix a Young diagram $\lambda$, and $k \in \mathbb{N}$. Let $\chi_1, \dots, \chi_k$ be $k$ pairwise disjoint NE-chains with $\chi_i \subseteq \lambda$ for $i = 1, \dots, k$. We can find $k$ diagrams $\lambda^1 \subseteq \lambda^{2} \subseteq \cdots \subseteq \lambda^{k} \subseteq  \lambda$, such that 
\begin{equation}\label{Eq.Layers1}
\lambda^{i} \subseteq \mathring{\lambda}^{i+1} \mbox{ for }i = 1, \dots, k-1;
\end{equation}
\begin{equation}\label{Eq.Layers2}
\sqcup_{i = 1}^k \chi_i \subseteq \sqcup_{i = 1}^k \partial \lambda^i.
\end{equation}
\end{lemma}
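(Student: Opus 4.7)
The plan is to induct on $k$, with Lemma \ref{Lem.Twist} doing the main work. For the base case $k=1$, take $\lambda^1 = \lambda(\chi_1)$. Then $\lambda^1 \subseteq \lambda$ since $\chi_1 \subseteq \lambda$ and $\lambda^1$ is the smallest Young diagram containing $\chi_1$, while $\chi_1 \subseteq \partial \lambda^1$ by Lemma \ref{Lem.ChainInBoundary}.

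For the inductive step from $k-1$ to $k$, the key reduction is to replace $(\chi_1, \dots, \chi_k)$ by pairwise disjoint NE-chains $(\chi'_1, \dots, \chi'_k)$ with $\sqcup_i \chi'_i = \sqcup_i \chi_i$ such that, writing $\lambda^k := \lambda(\chi'_k)$, we have $\chi'_k \subseteq \partial \lambda^k$ and $\chi'_i \subseteq \mathring{\lambda^k}$ for every $i < k$. Granted this, the induction hypothesis applied to the $k-1$ chains $\chi'_1, \dots, \chi'_{k-1}$ inside the Young diagram $\mathring{\lambda^k}$ produces $\lambda^1 \subseteq \dots \subseteq \lambda^{k-1} \subseteq \mathring{\lambda^k}$ satisfying (\ref{Eq.Layers1}) and (\ref{Eq.Layers2}) for the first $k-1$ chains; adjoining $\lambda^k$ at the top yields the full statement, since $\lambda^{k-1} \subseteq \mathring{\lambda^k}$ supplies the missing case of (\ref{Eq.Layers1}) and $\chi'_k \subseteq \partial \lambda^k$ the missing case of (\ref{Eq.Layers2}).

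To build the replacement I would iteratively apply Lemma \ref{Lem.Twist}. Set $\chi_j^0 := \chi_j$. Given $(\chi_1^m, \dots, \chi_k^m)$, search for an index $i < k$ satisfying either (a) $\lambda(\chi_i^m) \not\subseteq \lambda(\chi_k^m)$, or (b) $\chi_i^m \cap \partial \lambda(\chi_k^m) \neq \emptyset$. If no such $i$ exists, stop; otherwise apply Lemma \ref{Lem.Twist} to the pair $(\chi_i^m, \chi_k^m)$, defining $\chi_k^{m+1}$ to be the ``outer'' chain (conclusion (3) of that lemma) and $\chi_i^{m+1}$ the ``inner'' one (conclusion (4)), and leave all other chains unchanged. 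Properties (1), (2), (5) of Lemma \ref{Lem.Twist} keep the union of chains fixed, preserve pairwise disjointness (the replaced pair is internally disjoint and has the same union as before, hence stays disjoint from the untouched chains), and yield $\lambda(\chi_k^{m+1}) = \max(\lambda(\chi_i^m), \lambda(\chi_k^m))$.

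The main obstacle is proving termination of this procedure, which I would control via the lexicographic quantity $(|\lambda| - |\lambda(\chi_k^m)|,\, N_m)$ on $\mathbb{Z}_{\geq 0}^2$, where $N_m := \bigl|\,\bigl(\sqcup_{j \neq k}\chi_j^m\bigr) \cap \partial \lambda(\chi_k^m)\,\bigr|$. In case (a), $\lambda(\chi_k^{m+1})$ strictly contains $\lambda(\chi_k^m)$, so the first coordinate strictly drops. In case (b), $\max(\lambda(\chi_i^m), \lambda(\chi_k^m)) = \lambda(\chi_k^m)$, so $\lambda(\chi_k^m)$ and hence $\partial\lambda(\chi_k^m)$ are unchanged, while the nonempty set $\chi_i^m \cap \partial \lambda(\chi_k^m)$ is absorbed into $\chi_k^{m+1}$, and $\chi_i^{m+1} \subseteq \mathring{\lambda(\chi_k^m)}$ contributes nothing to $N_{m+1}$; the other $\chi_j^m$ are untouched, so $N_{m+1} < N_m$. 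Both coordinates being nonnegative integers, the procedure terminates, and at the terminal step $M$, the failure of both (a) and (b) for every $i < k$ is exactly the condition $\chi_i^M \subseteq \mathring{\lambda(\chi_k^M)}$, while $\chi_k^M \subseteq \partial \lambda(\chi_k^M)$ holds automatically by Lemma \ref{Lem.ChainInBoundary}. Setting $\chi'_j := \chi_j^M$ completes the reduction.
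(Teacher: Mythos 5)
Your proof is correct and follows the same overall structure as the paper's: induction on $k$, with the inductive step reduced (via repeated use of Lemma~\ref{Lem.Twist}) to producing a replacement $(\chi'_1,\dots,\chi'_k)$ with the same union for which $\chi'_k \subseteq \partial \lambda(\chi'_k)$ and $\chi'_i \subseteq \mathring{\lambda(\chi'_k)}$ for $i<k$, after which the induction hypothesis is applied inside $\mathring{\lambda(\chi'_k)}$. The place where you diverge is in how the replacement is built. The paper runs a \emph{fixed} iteration of exactly $k-1$ twists, at step $m$ applying Lemma~\ref{Lem.Twist} to the current outer chain $\chi^m_k$ and the (so far untouched) chain $\chi^m_{m+1}$; the chains $\lambda(\chi^m_k)$ then form an increasing sequence of diagrams $\nu^1 \subseteq \nu^2 \subseteq \cdots$, and the monotonicity of $\nu^m \mapsto \mathring{\nu}^m$ guarantees that each $\chi^{m+1}_{m+1}$, once placed in $\mathring{\nu}^{m+1}$, stays in the interior of all later $\nu^{m'}$. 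You instead run a \emph{greedy} loop, twisting $\chi^m_k$ against any $\chi^m_i$ ($i<k$) that is either not yet enclosed by $\lambda(\chi^m_k)$ or still touches $\partial\lambda(\chi^m_k)$, and prove termination with the lexicographic quantity $(|\lambda|-|\lambda(\chi^m_k)|,\,N_m)$. Your termination argument is sound (a twist either strictly enlarges $\lambda(\chi^m_k)$, or leaves it fixed and strictly shrinks $N_m$; both coordinates are bounded below), and the terminal state is exactly the desired configuration. The paper's scheme avoids the termination bookkeeping at the small cost of verifying the nesting $\mathring{\nu}^{m} \subseteq \mathring{\nu}^{m'}$ for $m\le m'$, while yours trades that for a well-ordering argument; both give a complete proof.
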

\begin{proof} We proceed to verify the statement by induction on $k$. When $k = 1$, we can take $\lambda^1 = \lambda(\chi_1)$, and then (\ref{Eq.Layers1}) is vacuously satisfied, while (\ref{Eq.Layers2}) holds by Lemma \ref{Lem.ChainInBoundary}. In the sequel we assume $k \geq 2$ and that the result holds for $k-1$.

We define sequences of chains $(\chi^m_1, \dots, \chi^m_{k})$ for $m = 0, \dots, k$, as follows. Start by setting $\chi^{0}_{i} = \chi_i$ for $i = 1, \dots, k$. If $(\chi^m_1, \dots, \chi^m_{k})$ for some $m \leq k-1$ have been constructed, we let $\chi^{m+1}_i = \chi^m_i$ for $i \not \in \{m+1, k\}$. In addition, if $\chi_1', \chi_2'$ are as in Lemma \ref{Lem.Twist} for $\chi_1 = \chi^m_{k}$ and $\chi_2 = \chi^m_{m+1}$, we let $\chi^{m+1}_{m+1} = \chi_2'$ and $\chi^{m+1}_{k} = \chi_1'$. From Lemma \ref{Lem.Twist}[(1) and (2)], we observe that for $m \geq 0$, we have  
$$\mbox{(I) $\chi_1^m, \dots, \chi^{m}_{k}$ are pairwise disjoint, (II) $\sqcup_{i = 1}^{k} \chi^m_i = \sqcup_{i = 1}^{k} \chi^{m+1}_{i}$, (III) $\sqcup_{i = 1}^{k} \chi_i^m \subseteq \lambda$.}$$
In addition, from Lemma \ref{Lem.Twist}[(5)] we conclude $\lambda(\chi_{k}^m) \subseteq \lambda(\chi^{m+1}_{k})$ for $m = 0, \dots, k-1$. The latter and Lemma \ref{Lem.Twist}[(4)] further imply $\chi_{m+1}^{k} = \chi_{m+1}^{m+1} \subseteq \mathring{\nu}^{m+1} \subseteq \mathring{\nu}^{k}$ for $m = 0, \dots, k-2$, where we have set $\nu^{m} = \lambda(\chi^{m}_{k})$ for $m = 1 , \dots, k$. Finally, from Lemma \ref{Lem.Twist}[(3)] we see that $\chi_{k}^{k} \subseteq \partial \nu^{k}$.

From our work in the previous paragraph, we see that if we set $\lambda^{k} = \nu^{k}$, then $\chi^{k}_{1}, \dots, \chi^{k}_{k-1}$ are pairwise disjoint and contained in $\mathring{\lambda}^{k}$, while $\chi_{k}^{k} \subseteq \partial \lambda^{k}$. Applying the induction hypothesis to $(\chi_1^{k}, \dots, \chi^{k}_{k-1})$ and $\lambda = \mathring{\lambda}^{k}$, we obtain a sequence $\lambda^1 \subseteq \lambda^2 \subseteq \cdots \subseteq \lambda^{k-1} \subseteq \mathring{\lambda}^{k}$, satisfying (\ref{Eq.Layers1}) and (\ref{Eq.Layers2}) with $k$ replaced by $k-1$. If we append $\lambda^{k}$ to this list, we see that it satisfies (\ref{Eq.Layers1}). In addition, from property (III) above, we have $\lambda^{k} = \lambda(\chi^{k}_{k}) \subseteq \lambda$. Lastly, from condition (II) and $\chi_{k}^{k} \subseteq  \partial \lambda^{k}$, we conclude 
$$\sqcup_{i = 1}^{k} \chi_i = \sqcup_{i = 1}^{k} \chi_i^{k} \subseteq \sqcup_{i = 1}^{k-1} \partial \lambda^i \sqcup \partial \lambda^{k},$$
verifying (\ref{Eq.Layers2}). Overall, we see that $\lambda^1, \dots, \lambda^{k}$ satisfy the conditions of the lemma. This completes the induction step, and the general result follows by induction.
\end{proof}

We now turn to the final result of the section, which shows that if we take any maximal collection of $k$ pairwise disjoint NE-chains, contained in an $m \times n$ rectangle $R$, then their union contains the off-diagonal vertices near the corners $(1,-n)$ and $(m,-1)$, see the left side of Figure \ref{Fig.Rectangle}.

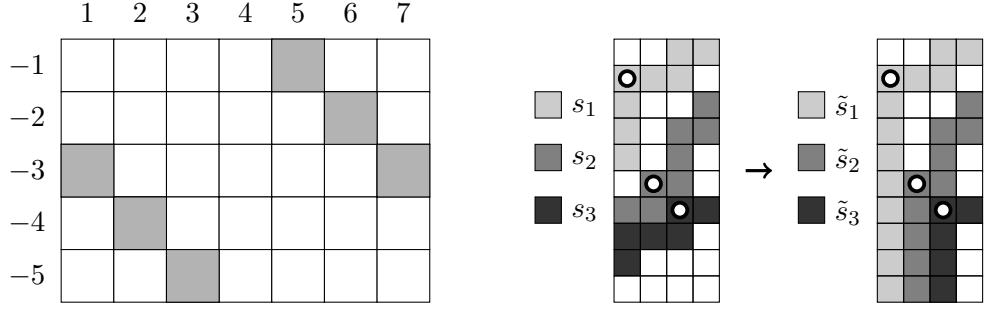
\begin{figure}[ht]
\centering
\begin{tikzpicture}[scale=0.7]

\def\m{7} 
\def\n{5} 
\def\k{3} 

\definecolor{Bg}{gray}{1.0}        
\definecolor{C1}{gray}{0.8} 
\definecolor{C2}{gray}{0.5} 
\definecolor{C3}{gray}{0.2} 

\begin{scope}[shift={(0,0)}]


\foreach \i in {1,...,\n}{
  \foreach \j in {1,...,\m}{
    \fill[white] (\j,-\i) rectangle ++(1,-1);
    \draw[black] (\j,-\i) rectangle ++(1,-1);
  }
}

  \foreach \j in {1,...,\m}{
      \node at (\j+0.5, -0.5) {\(\j\)};
  }
  \foreach \j in {1,...,\n}{
      \node at (0.35,-0.5 - \j) {\(-\j\)};
  }

\foreach \j in {1,...,\k}{
  \pgfmathsetmacro\y{-\n + \k - \j}
  \fill[gray!60] (\j,\y) rectangle ++(1,-1);
  \draw[black] (\j,\y) rectangle ++(1,-1);
}

\foreach \j in {1,...,\k}{
  \pgfmathsetmacro\x{\m - \j + 1}
  \pgfmathsetmacro\y{\j - \k - 1}
  \fill[gray!60] (\x,\y) rectangle ++(1,-1);
  \draw[black] (\x,\y) rectangle ++(1,-1);
}
\end{scope}

\begin{scope}[shift={(11,0)}]

\foreach \x/\y in {4/1, 3/1, 3/2, 2/2, 1/2, 1/3} {
  \fill[C3] (\x/2,-\y/2- 3.5) rectangle ++(1/2,-1/2);
}

\foreach \x/\y in {4/1, 4/2, 3/2, 3/3, 3/4, 2/4, 2/5, 1/5} {
  \fill[C2] (\x/2,-\y/2- 1.5) rectangle ++(1/2,-1/2);
}

\foreach \x/\y in {4/1, 3/1, 3/2, 2/2, 1/2, 1/2, 1/3, 1/4, 1/5} {
  \fill[C1] (\x/2,-\y/2- 0.5) rectangle ++(1/2,-1/2);
}

\foreach \i in {1,...,10}{
  \foreach \j in {1,...,4}{
    \draw[black] (\j/2,-\i/2 - 0.5) rectangle ++(1/2,-1/2);
  }
}

\end{scope}

\begin{scope}[shift={(11,-2)}]
  \draw[fill=C1] (-1,0) rectangle ++(0.5,-0.5);
  \node[anchor=west] at (-0.5,-0.3) {$s_1$};
  \draw[fill=C2] (-1,-1) rectangle ++(0.5,-0.5);
  \node[anchor=west] at (-0.5,-1.3) {$s_2$};
  \draw[fill=C3] (-1,-2) rectangle ++(0.5,-0.5);
  \node[anchor=west] at (-0.5,-2.3) {$s_3$};
  \draw[->, very thick] (3,-1.5) -- (3.5,-1.5);

\draw[fill=white, line width=0.5mm] (0.75,0.25) circle (4pt);
\draw[fill=white, line width=0.5mm] (1.25,-1.75) circle (4pt);
\draw[fill=white, line width=0.5mm] (1.75,-2.25) circle (4pt);
  
\end{scope}

\begin{scope}[shift={(16,0)}]

\foreach \x/\y in {4/1, 3/1, 3/2, 3/3, 3/4} {
  \fill[C3] (\x/2,-\y/2- 3.5) rectangle ++(1/2,-1/2);
}

\foreach \x/\y in {4/1, 4/2, 3/2, 3/3, 3/4, 2/4, 2/5, 2/6, 2/7, 2/8} {
  \fill[C2] (\x/2,-\y/2- 1.5) rectangle ++(1/2,-1/2);
}

\foreach \x/\y in {4/1, 3/1, 3/2, 2/2, 1/2, 1/2, 1/3, 1/4, 1/5, 1/6, 1/7, 1/8, 1/9, 1/10} {
  \fill[C1] (\x/2,-\y/2- 0.5) rectangle ++(1/2,-1/2);
}

\foreach \i in {1,...,10}{
  \foreach \j in {1,...,4}{
    \draw[black] (\j/2,-\i/2 - 0.5) rectangle ++(1/2,-1/2);
  }
}

\draw[fill=white, line width=0.5mm] (0.75,-1.75) circle (4pt);
\draw[fill=white, line width=0.5mm] (1.25,-3.75) circle (4pt);
\draw[fill=white, line width=0.5mm] (1.75,-4.25) circle (4pt);

\end{scope}

\begin{scope}[shift={(16,-2)}]
  \draw[fill=C1] (-1,0) rectangle ++(0.5,-0.5);
  \node[anchor=west] at (-0.5,-0.3) {$\tilde{s}_1$};
  \draw[fill=C2] (-1,-1) rectangle ++(0.5,-0.5);
  \node[anchor=west] at (-0.5,-1.3) {$\tilde{s}_2$};
  \draw[fill=C3] (-1,-2) rectangle ++(0.5,-0.5);
  \node[anchor=west] at (-0.5,-2.3) {$\tilde{s}_3$};

\end{scope}

\end{tikzpicture}
\caption{The left side depicts the partition $R$ in Lemma \ref{Lem.OffDiag} and the vertices in (\ref{Eq.OffDiag}) that are contained in $\sqcup_{i = 1}^k \chi_i$ for $m = 7$, $n = 5$ and $k = 3$. The right side depicts the up-right paths $s_1, \dots, s_k$ and $\tilde{s}_1, \dots, \tilde{s}_k$ in the proof of Lemma \ref{Lem.OffDiag}. The circles indicate the locations of the points $(i,u_i)$ for $i = 1, \dots, k$.} \label{Fig.Rectangle}
\end{figure}

\begin{lemma}\label{Lem.OffDiag} Fix integers $m,n,k \in \mathbb{N}$ with $k \leq \min(m,n)$, and let $R = (R_1, R_2, \dots)$ be the partition with $R_i = m \cdot {\bf 1}\{i \leq n\}$. Let $S$ be the set of $k$-tuples $(s_1, \dots, s_k)$ of pairwise disjoint NE-chains that are contained in $R$ with the partial order induced by the set inclusion of the union $\sqcup_{i = 1}^k s_i$. If $(\chi_1, \dots, \chi_k) \in S$ is a maximal element, then for all $j = 1, \dots, k$, we have 
\begin{equation}\label{Eq.OffDiag}
(j,-n + k - j) \in \sqcup_{i = 1}^k \chi_i, \mbox{ and } (m - j + 1, j - k - 1) \in \sqcup_{i = 1}^k \chi_i.
\end{equation}
\end{lemma}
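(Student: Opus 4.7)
The plan is to apply Lemma \ref{Lem.Layers} to the maximal tuple $(\chi_1, \dots, \chi_k)$, use maximality to force the outer dimensions of the resulting nested Young diagrams, and then read off the corner cells from the nesting.

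First I would apply Lemma \ref{Lem.Layers} with $\lambda = R$: this gives Young diagrams $\lambda^1 \subseteq \cdots \subseteq \lambda^k \subseteq R$ satisfying $\lambda^i \subseteq \mathring{\lambda}^{i+1}$ and $\sqcup_i \chi_i \subseteq \sqcup_i \partial \lambda^i$. Each boundary $\partial \lambda^i$ is itself a NE-chain (in fact an up-right path, corresponding to the $s_i$ in the figure), and the $\partial \lambda^i$ are pairwise disjoint because $\lambda^i \subseteq \mathring{\lambda}^{i+1}$ implies $\partial \lambda^i \subseteq \mathring{\lambda}^{i+1}$, which is disjoint from $\partial \lambda^{i+1} = \lambda^{i+1} \setminus \mathring{\lambda}^{i+1}$. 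Hence $(\partial \lambda^1, \dots, \partial \lambda^k) \in S$, and the maximality of $(\chi_1, \dots, \chi_k)$ forces $\sqcup_i \chi_i = \sqcup_i \partial \lambda^i =: U$.

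The heart of the argument is the claim that maximality forces $\lambda^i_1 = m - k + i$ and $\ell(\lambda^i) = n - k + i$ for every $i \in \{1, \dots, k\}$. The upper bounds $\lambda^i_1 \leq m - k + i$ and $\ell(\lambda^i) \leq n - k + i$ follow from the strict increases $\lambda^i_1 < \lambda^{i+1}_1$ and $\ell(\lambda^i) < \ell(\lambda^{i+1})$ (both consequences of $\lambda^i \subseteq \mathring{\lambda}^{i+1}$) together with $\lambda^k \subseteq R$. The matching lower bounds are enforced by a cascading enlargement argument: if some $\lambda^{i_0}_1 < m - k + i_0$, one tries to add the cell $(\lambda^{i_0}_1 + 1, -1)$ to $\lambda^{i_0}$; if $\mathring{\lambda}^{i_0+1}$ already has room this is immediate, and otherwise one must first enlarge $\lambda^{i_0+1}$ in row $2$, which may cascade further upward through the layers. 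The cascade always terminates because the constraint $\lambda^k \subseteq R$ is eventually slack (by the gap $\lambda^{i_0}_1 < m - k + i_0$). Each new boundary cell produced this way is verified to lie outside $U$, so $U$ strictly grows, contradicting maximality. These are the $\tilde s_i$ modifications of the figure, with the circles marking the exact cells $(i, u_i)$ inserted at each level of the cascade. The analogous cascade in the column direction handles $\ell(\lambda^i)$.

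Once these shape constraints are in place, the first half of (\ref{Eq.OffDiag}) is a quick consequence of the nesting. The inequality $\lambda^{i-1} \subseteq \mathring{\lambda}^i$ reads $\lambda^{i-1}_{j-1} \leq \lambda^i_j - 1$, and iterating gives
\begin{equation*}
\lambda^{j_0}_{n-k+j_0} \;\geq\; \lambda^1_{n-k+1} + (j_0 - 1) \;\geq\; 1 + (j_0 - 1) \;=\; j_0,
\end{equation*}
where $\lambda^1_{n-k+1} \geq 1$ comes from $\ell(\lambda^1) = n-k+1$. Combined with $\lambda^{j_0}_{n-k+j_0+1} = 0 \leq j_0$ (from $\ell(\lambda^{j_0}) = n-k+j_0$), this shows $(j_0, -n+k-j_0) \in \lambda^{j_0}$ while $(j_0+1, -n+k-j_0-1) \notin \lambda^{j_0}$, hence $(j_0, -n+k-j_0) \in \partial \lambda^{j_0} \subseteq U$. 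The second half of (\ref{Eq.OffDiag}) follows by the completely symmetric argument in the row direction, using $\lambda^k_1 = m$ and iterating the nesting inequality in its column-wise form (or equivalently by transposing the rectangle and invoking the first half).

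The main obstacle is Step~2, the cascading enlargement: the combinatorial bookkeeping of which boundary cells are added, removed, or merely shifted when a diagram is enlarged is subtle, and one must verify that the cascade always terminates and yields a configuration still in $S$. Once that is done, Steps~1, 3, and~4 are essentially mechanical.
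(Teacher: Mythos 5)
Your proposal begins exactly as the paper's proof does---apply Lemma \ref{Lem.Layers}, set $s_i = \partial \lambda^i$, and use maximality of the tuple to conclude $\sqcup_i \chi_i = \sqcup_i \partial\lambda^i$---but the middle step diverges, and it diverges precisely at the place you flag as the ``main obstacle.'' The paper never tries to pin down the outer dimensions of the $\lambda^i$. Instead it makes a single, one-shot modification: take $u_i$ to be the topmost vertex of $s_i$ in column $i$, show $u_1 \ge u_2 \ge \cdots \ge u_k$, and replace the portion of $s_i$ in columns $\le i$ by the vertical segment $(i,-n),(i,-n+1),\dots,(i,u_i)$. The resulting $\tilde{s}_1,\dots,\tilde{s}_k$ are pairwise disjoint NE-chains in $R$ with $\sqcup_i s_i \subseteq \sqcup_i \tilde{s}_i$, so maximality forces equality; a pigeonhole count in column $i$ (each of $s_i,\dots,s_k$ meets that column in a cell with $b \le u_i$) then yields $u_i \ge -n+k-i$, hence $(j_0,-n+k-j_0)\in\tilde{s}_{j_0}\subseteq\sqcup_i s_i$, the desired contradiction. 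No cascade, no accounting of created and destroyed boundary boxes.

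Your Step~2---establishing $\lambda^i_1 = m-k+i$ and $\ell(\lambda^i) = n-k+i$ by cascading enlargement---is where the substance of the lemma actually lives, and it is not a routine ``mechanical'' verification. The outermost case is easy: if $\lambda^k_1<m$ or $\ell(\lambda^k)<n$, adding one box at the end of the first row or column of $\lambda^k$ strictly enlarges $\partial\lambda^k$ and touches no constraint. But for the inner layers (in particular $\lambda^1$, which is the one your Steps~3--4 actually use), the cascade is delicate for three interacting reasons you do not resolve: enlarging $\lambda^{i_0}$ may be blocked by the nesting constraint $\lambda^{i_0}\subseteq\mathring{\lambda}^{i_0+1}$, forcing you to first enlarge several outer layers; enlarging one row of some $\lambda^j$ can destroy the partition condition on $\lambda^j$ and thereby force additional boxes in other rows; and, most seriously, adding boxes to $\lambda^j$ typically removes boxes from $\partial\lambda^j$ (they become interior), so the union $\sqcup_i\partial\lambda^i$ does not automatically grow---you would have to show the removed cells are re-absorbed by $\partial\lambda^{j-1}$ once the inner layers are also enlarged. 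None of this is done, so as written the argument is incomplete. Also worth noting: your Step~2 claim is stronger than what you use downstream (only $\ell(\lambda^1)\ge n-k+1$ and $\lambda^k_1=m$ feed into Steps~3--4), while the paper extracts the weaker, more tractable inequality $u_i\ge -n+k-i$. Your Steps~3--4 are correct granting Step~2, and your suggestion to handle the second half of (\ref{Eq.OffDiag}) by transposition works just as well as the paper's $180^\circ$-rotation involution $(a,b)\mapsto(m-a+1,-n-b-1)$.
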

\begin{proof} By symmetry, it suffices to prove the first statement in (\ref{Eq.OffDiag}). Indeed, consider the map $f(a,b) = (-n-a -1, m - b + 1)$, which defines an involution of $R$ (viewed as a subset of $\mathbb{Z}_{>0} \times \mathbb{Z}_{<0}$). By applying it to each vertex in each $s_i$ for $(s_1, \dots, s_k) \in S$, we obtain an involution of $S$ that respects the partial order. Applying the first statement in (\ref{Eq.OffDiag}) to the image of $(\chi_1, \dots, \chi_k)$ under this involution, gives the second statement in (\ref{Eq.OffDiag}). 

In the remainder we fix a maximal $(\chi_1, \dots, \chi_k)$ and prove the first statement in (\ref{Eq.OffDiag}). Let $\lambda^1, \dots, \lambda^k$ be as in Lemma \ref{Lem.Layers} for our present choice of $\chi_1, \dots, \chi_k$, and $\lambda = R$. Set $s_i = \partial \lambda^i$ for $i = 1, \dots, k$ and note that $(s_1, \dots, s_k) \in S$. In addition, from (\ref{Eq.Layers2}) and the maximality of $(\chi_1, \dots, \chi_k)$, we conclude
\begin{equation}\label{Eq.OD1}
\sqcup_{i = 1}^k \chi_i = \sqcup_{i = 1}^k s_i.
\end{equation}
Consequently, it suffices to show that for all $j = 1, \dots, k$
\begin{equation}\label{Eq.OD2}
(j,-n + k - j) \in \sqcup_{i = 1}^k s_i.
\end{equation}

Suppose for the sake of contradiction that $(j_0,-n + k - j_0) \not \in \sqcup_{i = 1}^k s_i$ for some $j_0 \in \{1, \dots, k\}$. If $s_1 = \emptyset$, we can set $\hat{s}_1 = (j_0,-n + k - j_0)$, and then $(\hat{s}_1, s_2, \dots, s_k) \in S$ with 
$$\sqcup_{i = 1}^k s_i \subsetneq \hat{s}_1 \sqcup \sqcup_{i = 2}^k s_i.$$
The latter and (\ref{Eq.OD1}) contradict the maximality of $(\chi_1, \dots, \chi_k)$, and so $s_1 \neq \emptyset$. From (\ref{Eq.Layers1}) we conclude that $s_i \neq \emptyset$ for all $i = 1, \dots, k$. 

Let us enumerate the vertices in the up-right paths $s_i$ as usual by $s_i = ((a^i_1,b^i_1), \dots, (a^i_{r_i},b^i_{r_i}))$ for $i = 1, \dots, k$. Notice that $a^i_1 = 1$ and $b^{i}_{r_i} = -1$ for $i = 1, \dots, k$, and also from (\ref{Eq.Layers1})
\begin{equation*}
a_{r_1}^1 < a_{r_2}^2 < \cdots < a_{r_k}^k ,\mbox{ and } b_{1}^1 > b_{1}^2 > \cdots > b_{1}^k. 
\end{equation*}
In particular, we see that $s_i$ contains a vertex of the form $(i, u_i)$ for each $i = 1, \dots, k$, and we pick $u_i$ maximal with respect to this condition. Observe that 
\begin{equation}\label{Eq.OD4}
u_1 \geq u_2 \geq \cdots \geq u_k. 
\end{equation}
Indeed, if $i \leq k-1$, then $u_i \geq u_{i+1}$ is automatically satisfied if $u_i = -1$. If $u_i < -1$, then by maximality of $u_i$, we must have $(i+1, u_i) \in s_i = \partial \lambda^i \subseteq \mathring{\lambda}^{i+1}$. As $(i+1, u_{i+1}) \in \partial \lambda^{i+1}$, we conclude $u_i \geq u_{i+1}$ in this case as well.

Lastly, consider the pairwise disjoint up-right paths $\tilde{s}_1, \dots, \tilde{s}_k$ obtained as follows. Each $\tilde{s}_i$ is obtained from $s_i$ by deleting all vertices that come before $(i, u_i)$ and adding the vertices $(i,u_i-1), (i, u_i-2), \dots, (i, -n)$, see the right side of Figure \ref{Fig.Rectangle}. We observe that $s_i \cap \{(a,b): a \geq i+1\} = \tilde{s}_i \cap \{(a,b): a \geq i+1\}$. In addition, if $(a,b) \in s_i$ with $a \leq i$, we have $b \leq u_i$ (by maximality of $u_i$ and the up-right path condition), and so $b \leq u_a$ by (\ref{Eq.OD4}). In particular, $(a,b) \in \tilde{s}_a$. The last two observations show that $\sqcup_{i = 1}^k s_i \subseteq \sqcup_{i = 1}^k \tilde{s}_i,$
which by (\ref{Eq.OD1}) and the maximality of $(\chi_1, \dots, \chi_k)$ implies 
\begin{equation*}
\sqcup_{i = 1}^k s_i =  \sqcup_{i = 1}^k \tilde{s}_i.
\end{equation*}
Note that from (\ref{Eq.Layers1}), the sequence $(i,u_i), (i,u_i-1), \dots, (i, -n)$ contains vertices from $s_i, s_{i+1}, \dots, s_k$. This means $n + u_i + 1 \geq k -i + 1$ or $u_i \geq -n + k-i $. Consequently, $(j_0, -n + k - j_0) \in \tilde{s}_{j_0} \subseteq \sqcup_{i = 1}^k s_i$, leading to the desired contradiction. This concludes the proof of (\ref{Eq.OD2}) and hence the lemma.
\end{proof}

%
%
\subsection{Proof of Lemma \ref{L.GequalsH}}\label{SectionA.3} Suppose first that $k \geq \min(m,n) + 1$. If $m \leq n$, we set $\chi_i = ((i,1), \dots, (i,n))$ for $i = 1, \dots, m$ and $\chi_i = \emptyset$ for $i = m+1, \dots, k$, and if $m > n$, we set $\chi_i = ((1,i), \dots, (m,i))$ for $i = 1, \dots, n$ and $\chi_i = \emptyset$ for $i = n+1, \dots, k$. From (\ref{Eq.GDef2}) and (\ref{Eq.HDef}) we get
$$h_k(A) \geq A(\chi_1) + \cdots + A(\chi_k) = \sum_{i = 1}^m \sum_{j = 1}^n a_{i,j} = g_k(A).$$
On the other hand, as $a_{i,j} \geq 0$, we have for any pairwise disjoint NE-chains $(s_1, \dots, s_k)$ 
$$A(s_1) + \cdots + A(s_k) \leq \sum_{i = 1}^m \sum_{j = 1}^n a_{i,j} = g_k(A),$$
and so $h_k(A) \leq g_k(A)$ as well. This proves (\ref{Eq.GHEqual}) when $k \geq \min(m,n)+ 1$.

In the remainder we assume $k \leq \min(m,n)$. Since up-right paths are NE-chains, we have
\begin{equation}\label{Eq.GHIneq1}
h_k(A) \geq g_k(A).
\end{equation}
Suppose that $\chi_1, \dots, \chi_k$ are pairwise disjoint NE-chains, such that
$$h_k(A) = A(\chi_1) + \cdots + A(\chi_k).$$
Let $\tilde{\chi}_1, \dots, \tilde{\chi}_k$ be pairwise disjoint NE-chains such that 
$$\sqcup_{i = 1}^k \chi_i \subseteq \sqcup_{i = 1}^k \tilde{\chi}_i,$$
and $\cup_{i = 1}^k \tilde{\chi}_i$ is maximal with respect to set inclusion. As the entries of $A$ are non-negative, we have 
\begin{equation}\label{Eq.HWithMax}
h_k(A) = A(\tilde{\chi}_1) + \cdots + A(\tilde{\chi}_k).
\end{equation}

From Lemma \ref{Lem.OffDiag} we have that 
\begin{equation}\label{Eq.PropChains}
(j, k-j+1) \in \sqcup_{i = 1}^k \tilde{\chi}_i \mbox{, and }  (m-j+1, n-k + j) \in \sqcup_{i = 1}^k \tilde{\chi}_i.
\end{equation}
In addition, from Lemma \ref{Lem.Layers} and the maximality of $\cup_{i = 1}^k \tilde{\chi}_i$, we can find $k$ pairwise disjoint up-right paths $(\pi_1, \dots, \pi_k)$ (these are the $\partial \lambda^i$), such that 
\begin{equation}\label{Eq.PropPaths}
\sqcup_{i =1}^k \tilde{\chi}_i = \sqcup_{i = 1}^k \pi_i.
\end{equation}
Notice that no two vertices $(j, k-j+1)$ can belong to the same path $\pi_i$, and so precisely one vertex belongs to each path, showing that $\pi_i \neq \emptyset$ for all $i = 1, \dots, k$. Setting $\pi_i = ((a^i_1, b^i_1), \dots, (a^i_{r_i}, b^i_{r_i}))$, we have from Lemma \ref{Lem.Layers} that $a^i_1 = 1$, $b^{i}_{r_i} = n$ and
\begin{equation}\label{Eq.PropPaths2}
a^1_{r_1} < a_{r_2}^2 < \cdots < a^k_{r_k}, \mbox{ and } b_{1}^1 > b_{1}^2 > \cdots > b_{1}^k.
\end{equation}
From the second set of inequalities in (\ref{Eq.PropPaths2}) we have for $(a,b) \in \pi_i$ that $b \geq b_{1}^i \geq k-i+1$. From the first part of (\ref{Eq.PropChains}) and (\ref{Eq.PropPaths}), we know that $(k,1)$ belongs to some $\pi_i$, and the last observation shows that this is only possible if $b_1^k = 1$, $(k,1) \in \pi_k$. As $a^k_1 = 1$, we must in fact have $(1,1), \dots, (k,1) \in \pi_k$. Looking at $(k-1,2)$, we have that it can only belong to $\pi_k$ and $\pi_{k-1}$, and as it cannot belong to $\pi_k$ (as it contains $(k,1)$), we must have $b_1^{k-1} = 2$, $(k-1,2) \in \pi_{k-1}$. As $a^{k-1}_1 = 1$, we must in fact have $(1,2), (2,2), \dots, (k-1,2) \in \pi_{k-1}$. Iterating this argument, we conclude that $b_1^{i} = k-i+1$ and $(1,k-i+1), (2,k-i+1), \dots, (i,k-i+1) \in \pi_{i}$ for $i =1, \dots, k$.

If instead we use the first set of inequalities in (\ref{Eq.PropPaths2}), the second part of (\ref{Eq.PropChains}), and the fact that $b^{i}_{r_i} = n$ for $i = 1, \dots, k$, we similarly conclude that $a^i_{r_i} = m-k + i$ and $(m-k+i,n - i + 1), (m-k+i,n-i + 2), \dots, (m-k+i,n) \in \pi_{i}$ for $i = 1, \dots, k$. The work so far shows that the paths $\pi_1, \dots, \pi_k$ are pairwise disjoint, and $\pi_i$ connects $(1,k-i+1)$ in a horizontal straight line to $(i,k-i+1)$, then connects $(i,k-i+1)$ in some way to $(m-k+i,n - i + 1)$ and goes vertically up to $(m-k+i, n)$.

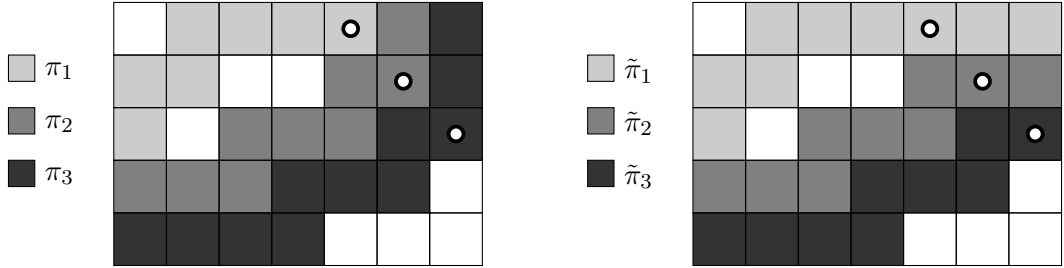
\begin{figure}[ht]
\centering
\begin{tikzpicture}[scale=0.7]

\def\m{7} 
\def\n{5} 
\def\k{3} 

\definecolor{Bg}{gray}{1.0}        
\definecolor{C1}{gray}{0.8} 
\definecolor{C2}{gray}{0.5} 
\definecolor{C3}{gray}{0.2} 

\begin{scope}[shift={(0,0)}]


\foreach \x/\y in {5/1, 4/1, 3/1, 2/1, 2/2, 1/2, 1/3} {
  \fill[C1] (\x,-\y) rectangle ++(1,-1);
}

\foreach \x/\y in {6/1, 6/2, 5/2, 5/3, 4/3, 3/3, 3/4, 2/4, 1/4} {
  \fill[C2] (\x,-\y) rectangle ++(1,-1);
}

\foreach \x/\y in {7/1, 7/2, 7/3, 6/3, 6/4, 5/4, 4/4, 4/5, 3/5, 2/5, 1/5} {
  \fill[C3] (\x,-\y) rectangle ++(1,-1);
}

\foreach \i in {1,...,\n}{
  \foreach \j in {1,...,\m}{
    \draw[black] (\j,-\i ) rectangle ++(1,-1);
  }
}

   \begin{scope}[shift={(0,-2)}]
  \draw[fill=C1] (-1,0) rectangle ++(0.5,-0.5);
  \node[anchor=west] at (-0.5,-0.3) {$\pi_1$};
  \draw[fill=C2] (-1,-1) rectangle ++(0.5,-0.5);
  \node[anchor=west] at (-0.5,-1.3) {$\pi_2$};
  \draw[fill=C3] (-1,-2) rectangle ++(0.5,-0.5);
  \node[anchor=west] at (-0.5,-2.3) {$\pi_3$};
  \end{scope}

\draw[fill=white, line width=0.5mm] (5.5,-1.5) circle (4pt);
\draw[fill=white, line width=0.5mm] (6.5,-2.5) circle (4pt);
\draw[fill=white, line width=0.5mm] (7.5,-3.5) circle (4pt);

\end{scope}

\begin{scope}[shift={(11,0)}]


\foreach \x/\y in {7/1, 6/1, 5/1, 4/1, 3/1, 2/1, 2/2, 1/2, 1/3} {
  \fill[C1] (\x,-\y) rectangle ++(1,-1);
}

\foreach \x/\y in {7/2, 6/2, 5/2, 5/3, 4/3, 3/3, 3/4, 2/4, 1/4} {
  \fill[C2] (\x,-\y) rectangle ++(1,-1);
}

\foreach \x/\y in {7/3, 6/3, 6/4, 5/4, 4/4, 4/5, 3/5, 2/5, 1/5} {
  \fill[C3] (\x,-\y) rectangle ++(1,-1);
}

\foreach \i in {1,...,\n}{
  \foreach \j in {1,...,\m}{
    \draw[black] (\j,-\i ) rectangle ++(1,-1);
  }
}

\draw[fill=white, line width=0.5mm] (5.5,-1.5) circle (4pt);
\draw[fill=white, line width=0.5mm] (6.5,-2.5) circle (4pt);
\draw[fill=white, line width=0.5mm] (7.5,-3.5) circle (4pt);

   \begin{scope}[shift={(0,-2)}]
  \draw[fill=C1] (-1,0) rectangle ++(0.5,-0.5);
  \node[anchor=west] at (-0.5,-0.3) {$\tilde{\pi}_1$};
  \draw[fill=C2] (-1,-1) rectangle ++(0.5,-0.5);
  \node[anchor=west] at (-0.5,-1.3) {$\tilde{\pi}_2$};
  \draw[fill=C3] (-1,-2) rectangle ++(0.5,-0.5);
  \node[anchor=west] at (-0.5,-2.3) {$\tilde{\pi}_3$};
  \end{scope}

\end{scope}

\end{tikzpicture}
\caption{The figure depicts the up-right paths $\pi_1, \dots, \pi_k$ and $\tilde{\pi}_1, \dots, \tilde{\pi}_k$. The circles indicate the locations of the points $(m-k+i, n-i+1)$ for $i = 1,\dots, k$.} \label{Fig.SwapEnds}
\end{figure}

Note that we can modify the paths $\pi_1, \dots, \pi_k$ to $\tilde{\pi}_1, \dots, \tilde{\pi}_k$, so that $\pi_i$ agrees with $\tilde{\pi}_i$ up to $(m-k+i,n - i + 1)$, but afterwards instead of going vertically up, goes horizontally to the right, see Figure \ref{Fig.SwapEnds}. One readily observes that $\sqcup_{i = 1}^k \pi_i =  \sqcup_{i = 1}^k \tilde{\pi}_i$, which together with (\ref{Eq.HWithMax}), (\ref{Eq.PropPaths}) shows
\begin{equation}\label{Eq.GHIneq2}
h_k(A) = A(\tilde{\chi}_1) + \cdots + A(\tilde{\chi}_k) \leq A(\tilde{\pi}_1) + \cdots + A(\tilde{\pi}_k) \leq g_k(A).
\end{equation}
In the last inequality we used the definition of (\ref{Eq.GDef}) and the fact that $\tilde{\pi}_1, \dots, \tilde{\pi}_k$ are pairwise disjoint and $\tilde{\pi}_i$ connects $(1,k-i+1)$ to $(m,n - i + 1)$. Equations (\ref{Eq.GHIneq1}) and (\ref{Eq.GHIneq2}) prove (\ref{Eq.GHEqual}) when $k \leq \min(m,n)$.
\end{appendix}

\subsection*{Acknowledgments}\label{Section1.4} The authors would like to thank the American Institute of Mathematics and the organizers Leonid Petrov and Axel Saenz Rodriguez of the AIM Workshop ``All roads to the {K}{P}{Z} universality class'', where this project was initiated. We are also grateful to Guillaume Barraquand and Ivan Corwin for their useful comments on earlier drafts of the paper. ZY was partially supported by Ivan Corwin's NSF grants DMS:1811143, DMS:2246576, Simons Foundation Grant 929852, and the Fernholz Foundation's `Summer Minerva Fellows' program. ED was partially supported by the Simons Award TSM-00014004 from the Simons Foundation International.

\bibliographystyle{amsplain} 
\bibliography{PD}

\providecommand{\bysame}{\leavevmode\hbox to3em{\hrulefill}\thinspace}
\providecommand{\MR}{\relax\ifhmode\unskip\space\fi MR }
\providecommand{\MRhref}[2]{%
  \href{http://www.ams.org/mathscinet-getitem?mr=#1}{#2}
}
\providecommand{\href}[2]{#2}
\begin{thebibliography}{10}

\bibitem{BBCS}
J.~Baik, G.~Barraquand, I.~Corwin, and T.~Suidan, \emph{Pfaffian {S}chur
  processes and last passage percolation in a half-quadrant}, Ann. Probab.
  \textbf{46} (2018), no.~6, 3015--3089.

\bibitem{BR01a}
J.~Baik and E.M. Rains, \emph{Algebraic aspects of increasing subsequences},
  Duke Math. J. \textbf{109} (2001), no.~1, 1--65.

\bibitem{BR01b}
\bysame, \emph{The asymptotics of monotone subsequences of involutions}, Duke
  Math. J. \textbf{109} (2001), no.~2, 205--281.

\bibitem{BR01c}
\bysame, \emph{Symmetrized random permutations}, Random matrix models and their
  applications, Math. Sci. Res. Inst. Publ. \textbf{40} (2001), 1--19.

\bibitem{betea2018perfect}
D.~Betea, C.~Boutillier, J.~Bouttier, G.~Chapuy, S.~Corteel, and
  M.~Vuleti{\'c}, \emph{Perfect sampling algorithm for {S}chur processes},
  Markov Processes And Related Fields \textbf{24} (2018), no.~3, 381--418.

\bibitem{BBNV18}
D.~Betea, J.~Bouttier, P.~Nejjar, and M.~Vuleti{\'c}, \emph{The free boundary
  {S}chur process and applications {I}}, Ann. Henri Poincar{\' e}, vol.~19,
  Springer International Publishing, 2018, pp.~3663--3742.

\bibitem{BCGR22}
E.~Bisi, Cunden F.D., S.~Gibbons, and D.~Romik, \emph{The oriented swap process
  and last passage percolation}, Random Struct. Alg. \textbf{60} (2022),
  690--715.

\bibitem{B11}
A.~Borodin, \emph{Schur dynamics of the {S}chur processes}, Adv. Math.
  \textbf{228} (2011), no.~4, 2268--2291.

\bibitem{BR05}
A.~Borodin and E.M. Rains, \emph{Eynard--{M}ehta theorem, {S}chur process, and
  their {P}faffian analogs}, J. Stat. Phys. \textbf{121} (2005), 291--317.

\bibitem{DNV22}
D.~Dauvergne, M.~Nica, and B.~Vir{\'a}g, \emph{Last passage percolation: a
  unified approach}, Probab. Surveys \textbf{19} (2022), 65--112.

\bibitem{F95b}
S.~Fomin, \emph{Schensted algorithms for dual graded graphs}, J. Alg. Comb.
  \textbf{4} (1995), 5--45.

\bibitem{F95a}
\bysame, \emph{Schur operators and {K}nuth correspondences}, J. Comb. Theory
  Ser. A \textbf{72} (1995), no.~2, 277--292.

\bibitem{F86}
S.V. Fomin, \emph{Generalized {R}obinson-{S}chensted-{K}nuth correspondence},
  J. Math. Sci. \textbf{41} (1988), 979--991.

\bibitem{Greene74}
C.~Greene, \emph{An extension of {S}chensted's theorem}, Adv. Math. \textbf{14}
  (1974), 254--265.

\bibitem{J00}
K.~Johansson, \emph{Shape fluctuations and random matrices}, Commun. Math.
  Phys. \textbf{209} (2000), 437--476.

\bibitem{johansson2006random}
\bysame, \emph{Random matrices and determinantal processes}, Les Houches,
  vol.~83, Elsevier, 2006, pp.~1--56.

\bibitem{K70}
D.E. Knuth, \emph{Permutations, matrices, and generalized {Y}oung tableaux},
  Pacific J. Math. \textbf{34} (1970), no.~3, 709--727.

\bibitem{K06}
C.~Krattenthaler, \emph{Growth diagrams, and increasing and decreasing chains
  in fillings of {F}errers shapes}, Adv. Appl. Math. \textbf{37} (2006),
  404--431.

\bibitem{MP17}
K.~Matveev and L.~Petrov, \emph{$q$-randomized {R}obinson-{S}chensted-{K}nuth
  correspondences and random polymers}, Ann. Inst. Henri Poincar{\' e} D
  \textbf{4} (2017), no.~1, 1--123.

\bibitem{OR03}
A.~Okounkov and N.~Reshetikhin, \emph{Correlation function of the {S}chur
  process with application to local geometry of random 3-dimensional {Y}oung
  diagram}, J. Amer. Math. Soc. \textbf{16} (2003), 581--603.

\bibitem{Romik15}
D.~Romik, \emph{The surprising mathematics of longest increasing subsequences},
  Cambridge University Press, Cambridge, UK, 2015.

\bibitem{Stanley99}
R.P. Stanley, \emph{Enumerative combinatorics: {V}olume 2: 62 of {C}ambridge
  {S}tudies in {A}dvanced {M}athematics}, Cambridge University Press,
  Cambridge, UK, 1999.

\end{thebibliography}

\end{document}